\newcommand\T{\rule{0pt}{3.0ex}}       % Top strut
\newcommand\B{\rule[-1.5ex]{0pt}{0pt}} % Bottom strut
\renewcommand{\phi}{\varphi}
\newcommand\+{\;\lower\plusheight\hbox{$+$}\;}
\newcommand\lldots{\;\lower\plusheight\hbox{$\cdots$}\;}
\newcommand{\mathsym}[1]{{}}
\newcommand{\mat}[4] {\left(\begin{array}{cc} #1 & #2 \\ #3 &
    #4 \end{array}\right)}
\newcommand{\bbZ}[0]  { \mathbb{Z}}
\newtheorem{Theorem}{Theorem}[section]
\newtheorem{Lemma}[Theorem]{Lemma}
\newtheorem{Corollary}[Theorem]{Corollary}
\newtheorem{Definition}[Theorem]{Definition}
\newtheorem{Algorithm}{Algorithm}[section]
\def\Im{\mathop{\rm Im}\nolimits}
\newdimen\plusheight
\newdimen\minusheight
\newdimen\cdotsheight
\begin{document}

%\usefont{T1}{CowFont}{m}{n}

\title[Level 20]{Series for $1/\pi$ of
  signature $20$}
\author{Tim Huber, Dan Schultz and Dongxi Ye}
\address{
School of Mathematical and Statistical Sciences, University of Texas Rio Grande
Valley, Edinburg, Texas 78539, USA.}
\email{timothy.huber@utrgv.edu}
\address{
Department of Mathematics, Pennsylvania State University, University Park, State
College, Pennsylvania 16802, USA.}
\email{dps23@psu.edu}
\address{
Department of Mathematics, University of Wisconsin\\
480 Lincoln Drive, Madison, Wisconsin 53706, USA.}
\email{lawrencefrommath@gmail.com}

\subjclass[2010]{11F03; 11F11; 11F27}
\keywords{Dedekind eta function; Eisenstein series; 
Modular form; Pi.}

\begin{abstract}
Properties of theta functions and Eisenstein series
dating to Jacobi and Ramanujan are used
to deduce differential equations associated with McKay Thompson
series of level $20$. These equations induce expansions for
modular forms of level $20$ in terms of modular functions. The theory
of singular values is applied to derive expansions for $1/\pi$ of
signature $20$ analogous to those formulated by Ramanujan. 
\end{abstract}
\maketitle

%------------------------------------------------------------

\numberwithin{equation}{section}
\allowdisplaybreaks

\section{Introduction}

Let $|q| < 1 $ and define the three null theta functions by 
\begin{align} \label{jac0}
\theta_{3}(q) = \sum_{n= - \infty}^{\infty} q^{n^{2}}, \qquad \theta_{4}(q) = \sum_{n = - \infty}^{\infty} (-1)^{n} q^{n^{2}}, \qquad \theta_{2}(q) = \sum_{n=-\infty}^{\infty} q^{(n + 1/2)^{2}}.
\end{align}
In the Nineteenth Century, Jacobi proved that each $\vartheta =
\theta_{i}(q)$ satisfies a third order differential equation
\cite{MR1578608}
%\begin{align}
%\nonumber
 % (\vartheta^{2}\vartheta_{qqq}&-15\vartheta\vartheta_{q}\vartheta_{qq}+30\vartheta_{q}^{3})^{%2}+32(\vartheta\vartheta_{qq}-3\vartheta_{q}^{2})^{3}
 % \\  &=4\vartheta^{10}(\vartheta\vartheta_{qq}-3\vartheta_{q}^{2})^{3},
 %                                \quad  \vartheta_q :=q
  %    \frac{d \vartheta}{dq}   \label{eq:2}
%\end{align}
and that the triple of null theta functions satisfies a quartic relation, namely,
  \begin{align} \label{jac}
    \theta_{3}^{4}(q) = \theta_{4}^{4}(q) + \theta_{2}^{4}(q).
  \end{align}
%or, equivalently, through the Jacobi triple product identity, 
%\begin{align}
%  \prod_{n=1}^{\infty} (1 + q^{2n+1})^{8} = \prod_{n=1}^{\infty} (1 - q^{2n-1})^{8} + 16 q \prod_{n=1}^{\infty} (1 + q^{2n})^{8}.
%\end{align}
Jacobi referred to \eqref{jac} as an ``\textit{aequatio identica satis
  abstrusa}'' \cite[p. 147]{funnov}. 
%The fact that the theta functions are modular
%forms of weight $1/2$ of level two motivates relations with
%Eisenstein series such as
%Formulas relating theta functions to  Eisenstein series
%connect divisor sums to
%representations by quadratic forms. The simplest of such formulas are
%\begin{align}
 % \label{eq:3}
 % \theta_{3}^{2}(q) = \frac{4P(q^{4}) - P(q)}{3},\qquad  \theta_{3}^{8}(-q) = \frac{16Q(q^{2}) - Q(q)}{15},
%\end{align}
Ramanujan proved an equivalent coupled system of differential equations for Eisenstein series on the
full modular group.
% \cite{Raman1}
%{\allowdisplaybreaks \begin{equation} 
%\label{rdiff1}  q \frac{dE_{2}}{dq} = \frac{E_{2}^{2} - E_{4}}{12}, \qquad  q \frac{dE_{4}}{dq} = \frac{E_{2}E_{4} - E_{6}}{3}, \qquad q \frac{dE_{6}}{dq} = \frac{E_{2}E_{6} - E_{4}^{2}}{2},
%\end{equation}}
%where the normalized Eisenstein series $E_{k} = E_{k}(q)$ for $PSL(2, \Bbb Z)$  are defined by
%\begin{align} \label{eis_full}
 % E_{2k}(q) = 1 + \frac{2}{\zeta(1 - 2 k)} \sum_{n=1}^{\infty} \frac{n^{2k-1} q^{n}}{1 - q^{n}},
%\end{align}
%Although not apparent in the classical literature, there exists a coupled system of differential equations
This system and Ramanujan's parameterizations for Eisenstein series in
terms of theta functions will be used to formulate a coupled system
for level $2$ modular forms 
\begin{align}
\label{deq1} q\frac{d}{dq}\theta_{3}^{4}
  &=\frac{1}{3}\left(\theta_{2}^{8}-\theta_{4}^{8}+\theta_{3}^{4}P_{2}\right),
  \quad 
q\frac{d}{dq}\theta_{4}^{4}
  =\frac{1}{3}\left(\theta_{2}^{8}-\theta_{3}^{8}+\theta_{4}^{4}P_{2}\right), \\
q\frac{d}{dq}\theta_{2}^{4}
  &=\frac{1}{3}\left(\theta_{3}^{8}-\theta_{4}^{8}+\theta_{2}^{4}P_{2}\right),
  \quad 
q\frac{d}{dq}P_{2}
=\frac{2P_{2}^{2}-\theta_{2}^{8}-\theta_{3}^{8}-\theta_{4}^{8}}{12}, \label{deq2}
\end{align}
where $P_n= P(q^n)$ is defined in terms of the weight two Eisenstein series 
$$
P(q)=1-24\sum_{j=1}^{\infty}\frac{jq^{j}}{1-q^{j}}.
%\quad\mbox{ and}\quad Q(q)=1+240\sum_{j=1}^{\infty}\frac{j^{3}q^{j}}{1-q^{j}}.
$$
%Identity \eqref{jac} has many applications. It can be used to derive divisor sum representations for the number of representations of an integer as the sum of eight squares. The identity also relates the number of representations of an integer as the sum of four squares with its representations as the sum of four triangular numbers.  
%Identity \eqref{jac} plays an important role in standard proofs of Jacobi's classical inversion formula
%\begin{align} \label{jinv}
 % y = \pi \frac{{_{2}F_{1}}(1/2, 1/2; 1; 1 - x)}{{_{2}F_{1}}(1/2, 1/2; 1; x)}, \qquad q = \exp%(-y), \qquad x(q) = \frac{\theta_{2}^{4}(q)}{\theta_{3}^{4}(q)},
%\end{align}
%where ${_{2}F_{1}}(a, b; c; z)$ denotes the hypergeometric
%function. Finally, \eqref{jac}
%has interesting interpretations for space-time supersymmetry in the
%physics literature of string theory \cite[p. 35]{MR2151030}. 
We apply \eqref{deq1}--\eqref{deq2} to obtain a third order differential
equation for $\theta_{3}^4$ with rational coefficients in the level $4$
modular function $\theta_{2}^{4}\theta_{4}^{4}/\theta_{3}^{8}$. From
these identities and \eqref{jac}, we derive a third order differential
equation for certain level 20 modular forms with
coefficients in the field of rational functions generated by McKay Thompson series
of level $20$. The theory of singular values 
%for $\theta_{2}^{4}\theta_{4}^{4}/\theta_{3}^{8}$ 
is then used to construct
new Ramanujan-Sato expansions for $1/\pi$.

%Each differential equation yields an expansion for a weight two modular form
%$f(\tau)$ of level $20$ in terms of a rational function $x(\tau)$ of
%McKay-Thompson series. For each such $x(\tau)$, we construct a complete list of
%rational and 
%quadratic singular values within the radius of convergence. 

The McKay-Thompson series for the
subgroups $\Gamma$ that will be relevant here are given in
terms of the Dedekind eta-function, defined for $q = e^{2 \pi i\tau}$
and $\tau\in\mathbb{H}$ by
$\eta(\tau)=q^{1/24}\prod_{j=1}^{\infty}(1-q^{j})$ with $\eta_\ell=
\eta(\ell\tau)$. These are  \cite{MR554399} 

\begin{align}
\label{uvdef}
u&=\left(\frac{\eta_{1}\eta_{20}}{\eta_{4}\eta_{5}}\right)^{2},\quad v=\left(\frac{\eta_{2}\eta_{5}\eta_{20}}{\eta_{1}\eta_{4}\eta_{10}}\right)^{2}\\
\label{kwdef}
k&=
\left(\frac{\eta_{4}\eta_{20}}{\eta_{2}\eta_{10}}\right)^{2},\quad w =
\left(\frac{\eta_{2}\eta_{20}}{\eta_{4}\eta_{10}}\right)^{3}.
\end{align}

In the following table, %\ref{T1} 
we list the appropriate group and Hauptmodul using the notation of \cite{MR554399} for $\Gamma_{0}(20)$ extended by
the indicated Atkin-Lehner involutions. The function $t_{\Gamma}$ represents
the normalized Hauptmodul for $\Gamma$.
\begin{longtable}{|c|c|}
\hline $\Gamma$ & $t_{\Gamma}$\T\\ \hline
 \text{20+} &  $\frac{1}{u}-2+u$ \T\B\\ \hline
 \text{20$|$2+} & $\frac{1}{w} - w$ \T\B\\ \hline
 \text{20+4} & $\frac{1}{v} +2$\T\B\\ \hline
 \text{20$|$2+5} & $\frac{1}{k}$ \T\B\\ \hline
 \text{20$|$2+10} & $\frac{1}{w}$ \T\B\\ \hline
 \text{20+20} & $\frac{1}{u}-2$
                \T \B \\ 
                \hline
%\caption{Hauptmodul for $\Gamma$}
% \label{T1}
\end{longtable}
As in the lower level analogues, weight two forms that appear in
our analysis are theta functions corresponding to binary quadratic
forms. The class number $h(-20) = 2$, and the corresponding inequivalent forms are 
\begin{align*}
  Z=\left(\sum_{m=-\infty}^{\infty}\sum_{n=-\infty}^{\infty}q^{m^{2}+5n^{2}}\right)^{2},\quad
  \mathbf{Z} = \left ( \sum_{m=-\infty}^{\infty}
\sum_{n=-\infty}^{\infty}  q^{2m^{2} + 2m n+ 3n^{2}} \right )^{2}.
\end{align*}
Although we focus on differential equations satisfied by $Z$, sister
equations for $\mathbf{Z}$ may be derived using a similar
construction to that appearing here.

We require the notion of an eta-product, a function of the form
\begin{equation}
\label{etapr}
f(\tau)=\prod_{\delta | \ell} \left(\eta(\delta \tau)\right)^{r_\delta}
\end{equation}
where $\ell$ is a positive integer, the product is taken over the positive
divisors of $\ell$, and the $r_\delta$ are integers. Let $M_k(\Gamma_0(\ell))$ be
the space of modular forms of weight~$k$ with trivial multiplier system
for the modular subgroup $\Gamma_0(\ell)$;
see, e.g., \cite[Chapter 1]{MR2020489} for the definitions.
When $k$ is an even integer there is a simple test that can be used to determine
if an eta-product is in~$M_k(\Gamma_0(\ell))$:
\begin{Lemma}
\label{honda}
Let $\ell$ be a positive integer and consider the eta-product $f(\tau)$ defined by \eqref{etapr}.
Let
$$
k=\frac12\sum_{\delta | \ell} r_{\delta}\quad\mbox{and}\quad s=\prod_{\delta | \ell} \delta^{\left| r_{\delta}\right|}.
$$
Suppose that{\em
\begin{enumerate}
\item $k$ is an even integer;
\item $s$ is the square of an integer;
\item $\displaystyle{\sum_{\delta | \ell} \delta\,r_{\delta} \equiv 0 \pmod{24}}$;
\item $\displaystyle{\sum_{\delta | \ell}\frac{\ell}{ \delta}\,r_{\delta} \equiv 0 \pmod{24}}$;
\item $\displaystyle{\sum_{\delta | \ell} \mbox{gcd}(d,\delta)^2\,\frac{r_\delta}{ \delta} \geq 0}$ for all $d|\ell$.
 \end{enumerate}}
Then $f \in M_k(\Gamma_0(\ell))$.
\end{Lemma}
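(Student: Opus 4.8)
The plan is to establish the three defining properties of membership in $M_k(\Gamma_0(\ell))$: holomorphy of $f$ on $\mathbb{H}$, weight-$k$ invariance under $\Gamma_0(\ell)$, and holomorphy at every cusp. The first is free: $\eta$ is holomorphic and zero-free on $\mathbb{H}$, so the same holds for any eta-product, and in particular $f$ never vanishes on $\mathbb{H}$. Conditions (1)--(4) will be used for the automorphy property and condition (5) for holomorphy at the cusps.

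For the automorphy, the starting point is that every $\delta\mid\ell$ divides the lower-left entry $c$ of any $\gamma=\smat{a}{b}{c}{d}\in\Gamma_0(\ell)$, so that $\delta(\gamma\tau)=\gamma_\delta(\delta\tau)$ with $\gamma_\delta=\smat{a}{b\delta}{c/\delta}{d}\in SL_2(\mathbb{Z})$. Feeding this into the transformation formula $\eta(M\tau)=\varepsilon(M)(c_M\tau+d_M)^{1/2}\eta(\tau)$, where $\varepsilon(M)$ is the usual $24$th-root-of-unity multiplier of $\eta$, yields
\[
f(\gamma\tau)=\Bigl(\prod_{\delta\mid\ell}\varepsilon(\gamma_\delta)^{r_\delta}\Bigr)\,(c\tau+d)^{\frac12\sum_\delta r_\delta}f(\tau).
\]
Condition (1) makes the power of $c\tau+d$ the integer $k$ and removes the branch ambiguity, so what remains is to show that the homomorphism $\chi\colon\gamma\mapsto\prod_\delta\varepsilon(\gamma_\delta)^{r_\delta}$ is trivial on $\Gamma_0(\ell)$. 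I would check this on a set of generators. On $T=\smat{1}{1}{0}{1}$ the eta multiplier gives $\chi(T)=\exp\!\bigl(\tfrac{\pi i}{12}\sum_\delta\delta\,r_\delta\bigr)$, which equals $1$ exactly by condition (3); conjugating $T$ by the Fricke involution $\tau\mapsto-1/(\ell\tau)$ (which normalizes $\Gamma_0(\ell)$) produces in the same way the requirement $\sum_\delta(\ell/\delta)r_\delta\equiv0\pmod{24}$, which is condition (4). For the remaining generators one expands $\varepsilon(\gamma_\delta)$ through the explicit Dedekind-sum formula and applies Dedekind reciprocity; the upshot, which is the real content, is that $f$ transforms with the quadratic character $\gamma\mapsto\bigl(\tfrac{(-1)^k s}{d}\bigr)$, and since $k$ is even this is $\bigl(\tfrac{s}{d}\bigr)$, which is the trivial character precisely when $s=\prod_\delta\delta^{|r_\delta|}$ is a perfect square, i.e.\ condition (2).

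For holomorphy at the cusps, I would use that the cusps of $\Gamma_0(\ell)$ are represented by fractions with denominator $d\mid\ell$. Conjugating $f$ by a matrix of $SL_2(\mathbb{Z})$ sending $\infty$ to such a cusp and again invoking the eta transformation law, one finds that the order of vanishing of $\eta(\delta\tau)$ there, measured in the local uniformizer, is a fixed positive constant (depending only on $d$ and $\ell$) times $\gcd(\delta,d)^2/\delta$. Hence the order of $f$ at the cusp with denominator $d$ is a positive multiple of $\tfrac1{24}\sum_{\delta\mid\ell}\gcd(d,\delta)^2\,r_\delta/\delta$, which is $\geq0$ for all $d\mid\ell$ exactly under condition (5); the automorphy established above then forces each such order to be a nonnegative integer, so the $q$-expansion of $f$ at every cusp is a power series with no negative terms. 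Together with holomorphy on $\mathbb{H}$ and weight-$k$ invariance, this places $f$ in $M_k(\Gamma_0(\ell))$.

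The step I expect to be the main obstacle is the triviality of the character $\chi$: passing from the congruences (3) and (4) at the two relevant cusps and the square condition (2) to the vanishing of $\chi$ on all of $\Gamma_0(\ell)$ requires the full strength of the Dedekind-sum reciprocity law and careful tracking of the quadratic symbol. This is the classical theorem of Newman and Ligozat, and if one prefers it may simply be quoted in the form given in \cite[Chapter 1]{MR2020489} rather than reproved.
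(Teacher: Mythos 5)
Your proposal is correct and matches the paper's approach: the paper's entire proof of this lemma is a one-line citation to \cite[Thms.~1.64, 1.65]{MR2020489} (the Ligozat--Newman criterion), which is precisely the statement you sketch and which, for the hard part --- the triviality of the eta multiplier character on all of $\Gamma_0(\ell)$ via Dedekind-sum reciprocity --- you also end up quoting rather than reproving. The only point to flag is that checking the character on $T$ and its Fricke conjugate does not by itself cover a generating set of $\Gamma_0(\ell)$, but you acknowledge this explicitly and correctly defer it to the cited source, so there is no gap relative to what the paper itself does.
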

\begin{proof}
This is immediate from \cite[Thms. 1.64, 1.65]{MR2020489}. The main ideas of
the proof are given in \cite[Theorem 1]{honda}.
\end{proof}
We will need the following result about Eisenstein series of weight 2.
\begin{Lemma}
\label{sch}
Define $P_{\ell} = P(q^{\ell})$. For any positive integer $\ell \geq 2$, $$\ell P_{\ell}-P_{1}\in M_2(\Gamma_0(\ell)).$$
\end{Lemma}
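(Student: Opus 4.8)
The plan is to realize $g(\tau):=\ell P_\ell(\tau)-P_1(\tau)=\ell P(\ell\tau)-P(\tau)$ as a weight-$2$ modular form directly from the quasimodular transformation of $P=E_2$, which is the only external input needed. Recall that for every $\gamma=\smat{a}{b}{c}{d}\in SL_2(\bbZ)$,
\[
P\!\left(\tfrac{a\tau+b}{c\tau+d}\right)=(c\tau+d)^{2}P(\tau)+\tfrac{12}{2\pi i}\,c\,(c\tau+d),
\]
the classical transformation formula for the weight-$2$ Eisenstein series (equivalent to the identities for $P$ used by Jacobi and Ramanujan).

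First I would compute the transformation of $g$ under $\Gamma_0(\ell)$. Given $\gamma=\smat{a}{b}{c}{d}\in\Gamma_0(\ell)$, write $c=\ell c'$ with $c'\in\bbZ$; a one-line manipulation gives $\ell\cdot(\gamma\tau)=\gamma'\cdot(\ell\tau)$ where $\gamma'=\smat{a}{b\ell}{c'}{d}\in SL_2(\bbZ)$ and $c'(\ell\tau)+d=c\tau+d$. Applying the displayed formula to $P$ at $\gamma$ and to $P$ at $\gamma'$ (evaluated at $\ell\tau$) and combining, the correction terms of $\ell P(\ell\gamma\tau)$ and of $P(\gamma\tau)$ are $\tfrac{12}{2\pi i}\,\ell c'\,(c\tau+d)$ and $\tfrac{12}{2\pi i}\,c\,(c\tau+d)$ respectively; since $\ell c'=c$ these are equal and cancel in the difference, leaving $g(\gamma\tau)=(c\tau+d)^{2}g(\tau)$. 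Hence $g$ is weight-$2$ invariant with trivial multiplier under $\Gamma_0(\ell)$, and it is holomorphic on $\bbH$ because $P$ is.

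The remaining and only delicate point is holomorphy at the cusps. At $\infty$ it is immediate from $g=(\ell-1)+24\sum_{n\ge1}\sigma_1(n)q^{n}-24\ell\sum_{n\ge1}\sigma_1(n)q^{\ell n}$, a power series in $q$. For the other cusps I would use the non-holomorphic Eisenstein series $E_2^{*}(\tau)=P(\tau)-\tfrac{3}{\pi\,\Im\tau}$, which is a genuine weight-$2$ modular form on $SL_2(\bbZ)$ of moderate growth; since the $\tfrac{3}{\pi\,\Im\tau}$ terms cancel one has $g(\tau)=\ell E_2^{*}(\ell\tau)-E_2^{*}(\tau)$. Then for any $\sigma\in SL_2(\bbZ)$, $g|_2\sigma$ is a fixed linear combination of $E_2^{*}$ and of $E_2^{*}(\ell\,\cdot\,)|_2\sigma$, and each is bounded as $\Im\tau\to\infty$ --- the first by the weight-$2$ invariance of $E_2^{*}$, the second by writing $\smat{\ell}{0}{0}{1}\sigma$ as an element of $SL_2(\bbZ)$ times an upper-triangular matrix of determinant $\ell$ and again invoking the invariance of $E_2^{*}$. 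A holomorphic periodic function with at most polynomial growth in $\Im\tau$ has no principal part, so $g$ is holomorphic at every cusp, giving $g\in M_2(\Gamma_0(\ell))$.

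An alternative for this last step is simply to cite the standard fact that $\{\,P(\tau)-dP(d\tau):d\mid\ell,\ d>1\,\}$ spans the Eisenstein subspace of $M_2(\Gamma_0(\ell))$; either way, I expect the cusp bookkeeping to be the main obstacle in a fully self-contained write-up, the transformation computation under $\Gamma_0(\ell)$ being routine.
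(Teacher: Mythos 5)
Your proof is correct. The paper does not actually prove this lemma---it simply cites Schoeneberg, pp.~177--178---and what you have written is essentially the standard argument that the citation points to, carried out in full: the quasimodular transformation $P(\gamma\tau)=(c\tau+d)^2P(\tau)+\tfrac{12}{2\pi i}c(c\tau+d)$, the observation that writing $c=\ell c'$ makes the two correction terms coincide so that $\ell P_\ell-P_1$ transforms with weight $2$ under $\Gamma_0(\ell)$, and the cusp analysis via the bounded non-holomorphic completion $E_2^*$ together with the decomposition of $\smat{\ell}{0}{0}{1}\sigma$ into an $SL_2(\mathbb{Z})$ factor times an upper-triangular determinant-$\ell$ matrix. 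All the individual steps check out (in particular $\det\smat{a}{b\ell}{c'}{d}=ad-bc=1$, and the $\tfrac{3}{\pi\Im\tau}$ terms cancel because $\Im(\ell\tau)=\ell\Im\tau$), so your write-up in fact supplies more detail than the paper chose to include; the only thing it buys beyond the citation is self-containedness.
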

\begin{proof}
See \cite[pp. 177--178]{schoeneberg}.
\end{proof}
The collection of results in the next theorem will be necessary to
develop spanning sets for the spaces of modular forms in the remainder
of the paper.
\begin{Theorem}\label{th14}
The dimension of the space of modular forms of weight $2$ for the modular
subgroup $\Gamma_0(20)$ is given by
\begin{equation}
\label{dim14}
\dim M_2(\Gamma_0(20))=6.
\end{equation}
If $c_1$, $c_2$, $c_4$, $c_{5}$, $c_{10}$ and $c_{20}$ are any constants that satisfy
$$
20c_{1}+10c_{2}+5c_{4}+4c_{5}+2c_{10}+c_{20}=0,
$$
then
\begin{equation}
\label{P12714}
c_{1}P_{1}+c_2P_2+c_4P_4+c_{5}P_{5}+c_{10}P_{10}+c_{20}P_{20}\in M_2(\Gamma_0(20)).
\end{equation}
Furthermore,
\begin{equation}
\label{in14}
z,\,zu,\,zu^{-1},\,zv,\,zv^{-1}\in M_2(\Gamma_0(20)).
\end{equation}
\end{Theorem}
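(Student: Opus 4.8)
I would establish the three assertions separately.

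\textbf{The dimension \eqref{dim14}.} For $\Gamma_0(20)$ I would record that its index in the full modular group is $36$, that it has six inequivalent cusps, and that it has no elliptic points (because $4\mid 20$, killing the order-two points, and because the prime $2\equiv 2\pmod 3$ divides $20$, killing the order-three points); consequently $X_0(20)$ has genus $1$. The standard dimension formula for weight two, $\dim M_2(\Gamma_0(N))=(\text{genus})-1+(\text{number of cusps})$, then gives $\dim M_2(\Gamma_0(20))=1-1+6=6$. This step is purely mechanical.

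\textbf{The Eisenstein criterion \eqref{P12714}.} By Lemma \ref{sch}, $\ell P_\ell-P_1\in M_2(\Gamma_0(\ell))\subseteq M_2(\Gamma_0(20))$ for each $\ell\in\{2,4,5,10,20\}$. Given constants with $20c_1+10c_2+5c_4+4c_5+2c_{10}+c_{20}=0$, I would rewrite
\[
\sum_{\ell\mid 20}c_\ell P_\ell=\Bigl(c_1+\tfrac{c_2}{2}+\tfrac{c_4}{4}+\tfrac{c_5}{5}+\tfrac{c_{10}}{10}+\tfrac{c_{20}}{20}\Bigr)P_1+\sum_{\substack{\ell\mid 20\\ \ell\ge 2}}\frac{c_\ell}{\ell}\bigl(\ell P_\ell-P_1\bigr),
\]
and observe that the coefficient of the merely quasimodular $P_1$ is $\tfrac1{20}$ times the given linear form, hence $0$. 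The remaining sum is a linear combination of the modular forms $\ell P_\ell-P_1$, which proves \eqref{P12714}.

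\textbf{The forms in \eqref{in14}.} After checking $z\in M_2(\Gamma_0(20))$---directly from its definition, or by expressing $z$ as an eta-quotient and applying Lemma \ref{honda}---I would note that $u$ and $v$ are weight-zero eta-quotients, hence modular functions on $X_0(20)$ with no zeros or poles on $\mathbb{H}$; thus each of $zu,zu^{-1},zv,zv^{-1}$ is a weight-two form for $\Gamma_0(20)$, holomorphic on $\mathbb{H}$, and only its behaviour at the six cusps is in doubt. Clearing denominators via \eqref{uvdef}, each of the five functions becomes a single eta-quotient $\prod_{\delta\mid 20}\eta_\delta^{r_\delta}$, and I would then verify the five hypotheses of Lemma \ref{honda} for each. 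Hypotheses (1)--(4) are quick arithmetic checks on $\tfrac12\sum_\delta r_\delta$, $\prod_\delta\delta^{|r_\delta|}$, $\sum_\delta\delta\,r_\delta$, and $\sum_\delta(20/\delta)r_\delta$.

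The step that demands care---and the only real obstacle---is hypothesis (5): the non-negativity of $\sum_{\delta\mid 20}\gcd(d,\delta)^2\,r_\delta/\delta$ for every divisor $d$ of $20$, which is precisely holomorphy at the corresponding cusp and must be checked at all six values $d\in\{1,2,4,5,10,20\}$ for each of the five forms. Several of these sums come out equal to $0$ (the form does not vanish at that cusp); this reflects the exact way the zeros and poles of $u^{\pm1},v^{\pm1}$ at the cusps are cancelled by the cuspidal vanishing of $z$, and it is why the choices of $z,u,v$ work. Since none of the sums is negative, Lemma \ref{honda} yields \eqref{in14}. Together with \eqref{P12714}, this supplies the spanning sets for $M_2(\Gamma_0(20))$ used later in the paper.
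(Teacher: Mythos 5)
Your proposal is correct and follows essentially the same route as the paper: the dimension from the standard formula (the paper simply cites \cite[Prop.~6.1]{stein}), the Eisenstein combinations from Lemma \ref{sch} together with the inclusion $M_2(\Gamma_0(\ell))\subseteq M_2(\Gamma_0(20))$, and the eta-quotients $z,zu,zu^{-1},zv,zv^{-1}$ from Lemma \ref{honda}. The only difference is that you spell out the arithmetic the paper leaves as ``immediate.''
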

\begin{proof}
The dimension formula \eqref{dim14} follows from \cite[Prop. 6.1]{stein}.
The result \eqref{P12714} follows from Lemma \ref{sch} and the trivial property that
$$
M_k(\Gamma_0(\ell)) \subseteq M_k(\Gamma_0(m))\quad\mbox{if}\quad \ell |m.
$$
The results in \eqref{in14} are immediate consequences of Lemma \ref{honda}.
\end{proof}

\section{Spanning Sets}
In this section, we develop bases for weight two and four forms for
$\Gamma_{0}(20)$. It is well-known, e.g., \cite[p. 83]{stein}, that
\begin{equation}
\label{MSE}
M_{k}(\Gamma_{0}(\ell))=E_{k}(\Gamma_{0}(\ell))\oplus S_{k}(\Gamma_{0}(\ell))
\end{equation}
where $E_{k}(\Gamma_{0}(\ell))$ and $S_{k}(\Gamma_{0}(\ell))$ are the subspaces
of Eisenstein series and cusp forms, respectively, of weight $k$ for $\Gamma_0(\ell)$.
From the dimension formulas in \cite[p. 93]{stein}
we find $\dim{E_{2}(\Gamma_{0}(20))}=5$ and $\dim{S_{2}(\Gamma_{0}(20))}=1$.
In fact,
\begin{align}
\label{E20}
E_{2}(\Gamma_{0}(20))&=
                       \mbox{span}_{\mathbb{C}}\left\{2P(q^{2})-P(q),\,4P(q^4)-P(q),\,5P(q^5)-P(q),\atop10P(q^{10})-P(q),\,20P(q^{20})-P(q)\right\} \\
&=\left\{\left.c_{1}P_{1}+c_2P_2+c_4P_4+c_{5}P_{5}\atop+c_{10}P_{10}+c_{20}P_{20}\; \right| \;
{20c_{1}+10c_{2}+5c_{4}+4c_{5}\atop+2c_{10}+c_{20}=0}\right\}
\nonumber
\end{align}
and \cite{MR2058644}
\begin{equation}
\label{S20}
S_{2}(\Gamma_{0}(20)) = \mathbb{C}z,\qquad\text{where}\ z=\eta_{2}^{2}\eta_{10}^{2}.
\end{equation}
Therefore,
\begin{equation}
\label{M20}
M_{2}(\Gamma_{0}(20))=\mbox{span}_{\mathbb{C}}\left\{2P(q^{2})-P(q),\,4P(q^4)-P(q),\,5P(q^5)-P(q),\atop10P(q^{10})-P(q),\,20P(q^{20})-P(q),\,z\right\}.
\end{equation}
Similarly, we compute an explicit basis for $M_{4}(\Gamma_{0}(20))$:
\begin{equation}
\label{M420}
M_{4}(\Gamma_{0}(20))=\mbox{span}_{\mathbb{C}}\left\{Q(q),\,Q(q^2),\,Q(q^4),\,Q(q^5),\,Q(q^{10}),\,Q(q^{20}),\,z^{2},\atop\frac{z^{2}}{u},\,z^{2}v,\,z^{2}k^{2},\,z^{2}kw\right\}.
\end{equation}

The above construction can be applied to give a representation for
each of $zu,\,zu^{-1},\,zv$ and $zv^{-1}$ as the sum of Eisenstein
series and a cusp form.

\begin{Theorem} Let $z$ be defined by \eqref{S20}. Then 
\label{7ec}
\begin{align}
zu&=\frac{1}{72}\left(P_{1}-6P_{2}+20P_{4}-25P_{5}+30P_{10}-20P_{20}\right)+\frac{1}{3}z,\\
\frac{z}{u}&=\frac{1}{72}\left(-5P_{1}+6P_{2}-4P_{4}+5P_{5}-30P_{10}+100P_{20}\right)+\frac{1}{3}z,\\
zv&=\frac{1}{72}\left(-P_{1}+4P_{4}+P_{5}-4P_{20}\right)-\frac{1}{3}z,\\
\frac{z}{v}&=\frac{1}{72}\left(P_{1}-4P_{4}-25P_{5}+100P_{20}\right)-\frac{5}{3}z.
\end{align}
\end{Theorem}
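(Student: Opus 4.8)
The plan is to work entirely inside the explicit basis of $M_2(\Gamma_0(20))$ from \eqref{M20}. By \eqref{in14} in Theorem~\ref{th14} each of $zu,\ zu^{-1},\ zv,\ zv^{-1}$ lies in $M_2(\Gamma_0(20))$, and by the same theorem this space is six-dimensional, spanned by the five Eisenstein series $2P_2-P_1,\ 4P_4-P_1,\ 5P_5-P_1,\ 10P_{10}-P_1,\ 20P_{20}-P_1$ together with the normalized cusp form $z=\eta_2^2\eta_{10}^2$ (see \eqref{E20}--\eqref{S20}). Consequently each of the four target forms is a \emph{unique} combination of these six generators; equivalently, by the parametrization of $E_2(\Gamma_0(20))$ in \eqref{E20}, each equals $\sum_{\ell\mid 20}c_\ell P_\ell+b\,z$ for a unique tuple $(c_\ell,b)$ subject to $20c_1+10c_2+5c_4+4c_5+2c_{10}+c_{20}=0$. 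The theorem just records these tuples, so the proof is to determine them.

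I would determine them by comparing $q$-expansions. Using $\eta(\tau)=q^{1/24}\prod_{j\ge 1}(1-q^j)$, the eta-quotients $u,v$ of \eqref{uvdef} and $z$ of \eqref{S20} have power-series expansions one can write down directly; in particular $zu$ and $zv$ begin at $q^2$, while $zu^{-1}$ and $zv^{-1}$ begin at $q^0$. Likewise the six basis forms have explicit expansions, with constant terms $1,1,1,1,1,0$. Substituting an undetermined combination of the six generators into, say, $zu$, and equating the coefficients of $q^0,q^1,\dots,q^6$ produces a linear system in the six unknowns; solving it and then grouping the $P_\ell$-terms yields the first identity, and the same procedure applied to $zu^{-1},\,zv,\,zv^{-1}$ yields the remaining three.

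The one point that needs justification is that equality of initial Fourier coefficients is conclusive, and this is Sturm's bound: since $[\mathrm{SL}_2(\bbZ):\Gamma_0(20)]=36$, a form in $M_2(\Gamma_0(20))$ whose coefficients through $q^{6}$ all vanish is identically zero, so the six basis forms have linearly independent truncations $(a_0,\dots,a_6)$ and the linear system above is uniquely solvable and decisive. I expect this to be the only real obstacle, in the sense that nothing else is more than careful bookkeeping with the eta-product expansions; in practice one carries the expansions a few terms beyond $q^6$ as an additional check. Two built-in consistency checks are worth noting: in each of the four stated identities the Eisenstein coefficients satisfy $20c_1+10c_2+5c_4+4c_5+2c_{10}+c_{20}=0$, so the Eisenstein part genuinely lies in $E_2(\Gamma_0(20))$; and the overall constant term evaluates to $0$ for $zu$ and $zv$ and to $1$ for $zu^{-1}$ and $zv^{-1}$, matching the orders of vanishing at $\infty$ computed above.
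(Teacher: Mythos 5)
Your proposal is correct and follows the same route as the paper, which simply declares the identities ``immediate from \eqref{in14} and \eqref{M20}'': membership in $M_2(\Gamma_0(20))$ via the eta-product criterion, followed by coefficient matching against the explicit six-dimensional basis. Your explicit appeal to the Sturm bound ($2\cdot 36/12=6$) supplies the rigor the paper leaves implicit, but it is the same argument.
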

\begin{proof}
These are immediate from \eqref{in14} and \eqref{M20}.
\end{proof}

These relations induce identities between the level $20$ Hauptmodul.

\begin{Theorem}
\label{uvrecip}
Let $u,v$ be defined by \eqref{uvdef}. Then the following identity holds.
\begin{equation}
\frac{1}{u}+u=\frac{1}{v}+4+5v.
\end{equation}
\end{Theorem}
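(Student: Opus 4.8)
The plan is to leverage the representations in Theorem~\ref{7ec} together with the one‑dimensionality of $S_2(\Gamma_0(20))$. Consider the function
\begin{equation*}
F = \Bigl(\tfrac{1}{u}+u\Bigr)z - \Bigl(\tfrac{1}{v}+4+5v\Bigr)z
  = \Bigl(zu + \tfrac{z}{u}\Bigr) - \Bigl(zv + \tfrac{5z}{v}\Bigr) - 4z .
\end{equation*}
By \eqref{in14} each of $zu$, $z/u$, $zv$, $z/v$ lies in $M_2(\Gamma_0(20))$, and of course $z\in M_2(\Gamma_0(20))$, so $F\in M_2(\Gamma_0(20))$. Substituting the four explicit formulas from Theorem~\ref{7ec}, the Eisenstein parts combine to $\tfrac{1}{72}$ times an integer combination of $P_1,P_2,P_4,P_5,P_{10},P_{20}$, and the cusp parts combine to $\bigl(\tfrac13 + \tfrac13 + \tfrac13 + \tfrac53 - 4\bigr)z = 0\cdot z$. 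If the Eisenstein combination also vanishes identically, then $F=0$ and, since $z=\eta_2^2\eta_{10}^2\not\equiv 0$, dividing by $z$ gives the claimed identity $\tfrac1u+u=\tfrac1v+4+5v$.

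So the first key step is the purely mechanical check that
\begin{equation*}
(1-5)P_1 + (-6+6)P_2 + (20-4-4)P_4 + (-25+5-1-25)P_5 + (30-30)P_{10} + (-20+100+4-100)P_{20}
\end{equation*}
is the zero combination; i.e.\ that all six bracketed coefficients vanish. A quick glance suggests they do \emph{not} all vanish as written (e.g.\ the $P_1$ coefficient is $-4$), which signals that one cannot conclude $F=0$ coefficient‑by‑coefficient in this naive way. The honest route, and the second key step, is instead to observe that $F$ is an Eisenstein series in the five‑dimensional space $E_2(\Gamma_0(20))$ whose $q$‑expansion we can compute explicitly from the known expansions of $P_\ell$; to prove $F=0$ it then suffices, by the valence formula for $M_2(\Gamma_0(20))$ (dimension $6$ by Theorem~\ref{th14}), to check that the first several Fourier coefficients of $F$ vanish — a finite computation. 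Equivalently, one verifies directly from the eta‑quotient definitions \eqref{uvdef}–\eqref{S20} that the $q$‑expansions of $zu+z/u$ and $zv+5z/v+4z$ agree up to the Sturm bound.

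The main obstacle is therefore not conceptual but bookkeeping: getting the Eisenstein coefficient combination right and confirming it vanishes against the relation $20c_1+10c_2+5c_4+4c_5+2c_{10}+c_{20}=0$ that cuts out $E_2(\Gamma_0(20))$ inside the span of the $P_\ell$. In fact the cleanest presentation sidesteps the $P_\ell$ entirely: since $\bigl(\tfrac1u+u\bigr)z$ and $\bigl(\tfrac1v+4+5v\bigr)z$ both lie in the $6$‑dimensional space $M_2(\Gamma_0(20))$ and $z$ is a unit in the relevant ring of $q$‑series, it is enough to expand both sides as power series in $q$ using $\eta(\tau)=q^{1/24}\prod_{j\ge1}(1-q^j)$ and match coefficients past the Sturm bound. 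I expect the verification to go through with modest effort, and the only thing to be careful about is tracking the leading $q$‑powers of $u$ and $v$ (both behave like $q+O(q^2)$, so $1/u$ and $1/v$ contribute the genuinely singular terms that must cancel between the two sides).
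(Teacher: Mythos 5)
Your overall strategy --- substitute the formulas of Theorem~\ref{7ec} and check that the resulting combination of the $P_\ell$ and $z$ vanishes --- is exactly the paper's proof, but you abandon it because of an algebra slip and then replace it with a computation you never perform. The slip: $\bigl(\tfrac1v+4+5v\bigr)z=\tfrac{z}{v}+4z+5zv$, so the function to kill is
$$F=zu+\frac{z}{u}-5zv-\frac{z}{v}-4z,$$
whereas you wrote $F=zu+\tfrac{z}{u}-zv-\tfrac{5z}{v}-4z$, swapping the coefficients of $zv$ and $z/v$. With the correct $F$, Theorem~\ref{7ec} gives (after multiplying by $72$) the coefficients of $P_1,P_2,P_4,P_5,P_{10},P_{20}$ as
$$1-5+5-1,\quad -6+6,\quad 20-4-20+4,\quad -25+5-5+25,\quad 30-30,\quad -20+100+20-100,$$
all of which are $0$, and the cusp part is $\bigl(\tfrac13+\tfrac13+\tfrac53+\tfrac53-4\bigr)z=0$. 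So $F=0$ identically, and dividing by $z$ finishes the proof --- no Sturm bound, no valence formula, no $q$-expansions needed. (Your arithmetic is also internally inconsistent: even for the miscopied $F$ the cusp coefficients sum to $\tfrac13+\tfrac13+\tfrac13+\tfrac{25}{3}-4\neq 0$, not the $0$ you claim.)

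The fallback you propose --- expand both sides in $q$ and compare Fourier coefficients up to the Sturm bound --- is a legitimate alternative in principle, since both sides times $z$ lie in the finite-dimensional space $M_2(\Gamma_0(20))$. But as written it is only a plan: you state no explicit bound, compute no coefficients, and explicitly defer the verification (``I expect the verification to go through''). As it stands the proposal therefore does not prove the theorem; the fix is simply to redo the substitution correctly, at which point your first approach closes the argument and coincides with the paper's own proof.
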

\begin{proof}
By \eqref{dim14} and \eqref{in14} we have that $z,\,zu,\,zu^{-1},\,zv,\,zv^{-1}$ are linearly dependent;
in fact, from Theorem \ref{7ec} we have
$$
4z+5zv+\frac{z}{v}-zu-\frac{z}{u}=0.
$$
The claimed identity may be obtained by dividing both sides by $z$.
\end{proof}

The preceding proof is a prototype for the derivation of a plethora of relations between
the other Hauptmodul of level $20$. For our purposes, we next derive theta quotient
representations for certain linear functions in $v$.
\begin{Theorem}
\label{vvu}
\begin{align}
\label{1v}
1+v=\frac{\eta_{10}^{8}}{\eta_{1}\eta_{4}\eta_{5}^{3}\eta_{20}^{3}}, \qquad
1+5v=\frac{\eta_{2}^{10}\eta_{5}\eta_{20}}{\eta_{1}^{5}\eta_{4}^{5}\eta_{10}^{2}}.
\end{align}
\end{Theorem}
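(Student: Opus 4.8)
The plan is to clear denominators by multiplying each claimed identity through by the cusp form $z=\eta_{2}^{2}\eta_{10}^{2}$ of \eqref{S20}, thereby reducing it to an equality of holomorphic weight-two forms on $\Gamma_{0}(20)$, and then to verify that equality on finitely many Fourier coefficients. For the first identity, multiplying by $z$ gives on the left $z(1+v)=z+zv$, which by Theorem \ref{7ec} equals $\tfrac1{72}\left(-P_{1}+4P_{4}+P_{5}-4P_{20}\right)+\tfrac23 z\in M_{2}(\Gamma_{0}(20))$, a form with explicitly known $q$-expansion. On the right it gives the eta-product $z(1+v)=\eta_{2}^{2}\eta_{10}^{10}\eta_{1}^{-1}\eta_{4}^{-1}\eta_{5}^{-3}\eta_{20}^{-3}$, and I would check that its exponent vector $(r_{1},r_{2},r_{4},r_{5},r_{10},r_{20})=(-1,2,-1,-3,10,-3)$ satisfies the five hypotheses of Lemma \ref{honda} with $\ell=20$: one finds $k=2$, $s=2^{20}5^{16}$ is a square, $\sum_{\delta}\delta\, r_{\delta}=24\equiv0\pmod{24}$, $\sum_{\delta}(\ell/\delta)r_{\delta}=0\equiv0\pmod{24}$, and one checks the six cusp inequalities, so this eta-product also lies in $M_{2}(\Gamma_{0}(20))$.

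Both sides of $z(1+v)=\eta_{2}^{2}\eta_{10}^{10}\eta_{1}^{-1}\eta_{4}^{-1}\eta_{5}^{-3}\eta_{20}^{-3}$ are then weight-two forms on $\Gamma_{0}(20)$ with integral $q$-expansions beginning $q+\cdots$, and $\dim M_{2}(\Gamma_{0}(20))=6$ by \eqref{dim14}. By Sturm's bound it suffices to verify that the two $q$-expansions agree through the coefficient of $q^{6}$, since $\lfloor\tfrac{2}{12}[\mathrm{SL}_{2}(\mathbb{Z}):\Gamma_{0}(20)]\rfloor=\lfloor\tfrac{2}{12}\cdot36\rfloor=6$; expanding the eta-products confirms the match, and dividing by $z$ yields $1+v=\eta_{10}^{8}\eta_{1}^{-1}\eta_{4}^{-1}\eta_{5}^{-3}\eta_{20}^{-3}$.

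The second identity is handled in exactly the same way. Multiplying by $z$ and invoking Theorem \ref{7ec}, the left side becomes $z+5zv=\tfrac1{72}\left(-5P_{1}+20P_{4}+5P_{5}-20P_{20}\right)-\tfrac23 z\in M_{2}(\Gamma_{0}(20))$, while the right side becomes the eta-product $z(1+5v)=\eta_{2}^{12}\eta_{5}\eta_{20}\eta_{1}^{-5}\eta_{4}^{-5}$, whose exponent vector $(-5,12,-5,1,0,1)$ again satisfies Lemma \ref{honda} (here $k=2$, $s=2^{24}5^{2}$, $\sum_{\delta}\delta\, r_{\delta}=24$, $\sum_{\delta}(\ell/\delta)r_{\delta}=0$, plus the cusp inequalities). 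Comparing $q$-expansions through $q^{6}$ and dividing by $z$ gives $1+5v=\eta_{2}^{10}\eta_{5}\eta_{20}\eta_{1}^{-5}\eta_{4}^{-5}\eta_{10}^{-2}$.

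The one step that requires genuine computation rather than bookkeeping is the verification of hypothesis (5) of Lemma \ref{honda} — the six cusp-order inequalities — for the two eta-products; a failure there would signal a pole of the right-hand side at some cusp and would force one to replace the Sturm argument by a direct comparison of principal parts at all cusps of $X_{0}(20)$ together with the valence formula. Everything else, including the finite Fourier-coefficient check, is bounded and mechanical.
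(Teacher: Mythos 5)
Your proposal is correct and follows essentially the same route as the paper: multiply through by $z$, use Lemma \ref{honda} to place the resulting eta-quotient in $M_{2}(\Gamma_{0}(20))$, identify the left-hand side via Theorem \ref{7ec}, and finish with a finite computation. The only cosmetic difference is that you certify the final equality by a Sturm-bound comparison of $q$-expansions, whereas the paper decomposes the eta-product in the explicit basis \eqref{M20} and matches it against the expression for $z+zv$; these amount to the same finite check.
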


\begin{proof}
By the definition \eqref{S20} of $z$ and Lemma \ref{honda}, we can check that 
$$
z\frac{\eta_{10}^{8}}{\eta_{1}\eta_{4}\eta_{5}^{3}\eta_{20}^{3}}=\frac{\eta_{2}^{2}\eta_{10}^{10}}{\eta_{20}^{3}\eta_{5}^{3}\eta_{4}\eta_{1}}\in M_{2}(\Gamma_{0}(20)).
$$
By \eqref{M20}, it can be shown that
$$
z\frac{\eta_{10}^{8}}{\eta_{1}\eta_{4}\eta_{5}^{3}\eta_{20}^{3}}=-\frac{1}{72}P(q)+\frac{1}{18}P(q^{4})+\frac{1}{72}P(q^{5})-\frac{1}{18}P(q^{20})+\frac{2}{3}z.
$$
In addition, by Theorem \ref{7ec}, we find that
$$
z+zv=-\frac{1}{72}P(q)+\frac{1}{18}P(q^{4})+\frac{1}{72}P(q^{5})-\frac{1}{18}P(q^{20})+\frac{2}{3}z.
$$
Therefore, 
$$
z+zv=z\frac{\eta_{10}^{8}}{\eta_{1}\eta_{4}\eta_{5}^{3}\eta_{20}^{3}},
$$
and dividing both sides by $z$ gives the first identity of
\eqref{1v}. The second identity can be deduced in a similar way. We omit the details.
\end{proof}

%\begin{Corollary}
%\label{tvv}
%Let $t$ be defined by
%$$
%t=\left(\frac{\eta_{4}}{\eta_{1}}\right)^{8}.
%$$
%Then
%\begin{align}
%\label{tq}
%\frac{t}{(1+16t)^{2}}=&\frac{v}{(1+v)(1+5v)^{5}},\\
%\label{tq5}
%\left.\frac{t}{(1+16t)^{2}}\right|_{q\to q^{5}}&=\frac{v^{5}}{(1+v)^{5}(1+5v)}.
%\end{align}
%\end{Corollary}
%
%\begin{proof}
%It follows from Jacobi's well-known ``{\em aequatio identica satis abstrusa}'' \cite[Chapter 16,% Entry 25(vii)]{berndt91} that
%\begin{equation}
%\label{aeq}
%1+16t=\frac{\eta_{2}^{24}}{\eta_{1}^{16}\eta_{4}^{8}}.
%\end{equation}
%Then by the definition of $t$ and \eqref{aeq}, we have
%\begin{equation}
%\label{teta}
%\frac{t}{(1+16t)^{2}}=\frac{\eta_{1}^{24}\eta_{4}^{24}}{\eta_{2}^{48}}\quad\mbox{and}\quad\le%ft.\frac{t}{(1+16t)^{2}}\right|_{q\to q^{5}}=\frac{\eta_{5}^{24}\eta_{20}^{24}}{\eta_{10}^{48}}.
%\end{equation}
%Finally, identities \eqref{tq} and \eqref{tq5} follow from Theorem \ref{vvu} and \eqref{teta}.
%\end{proof}
%
%\begin{Remark}
 % The eta quotient $t$ is a Hauptmodul for the group $4-$, and
 % $t/(1+16t)^{2}$ is a Hauptmodul for $4+$.
%\end{Remark}

\section{Differential Equations}

In this section, we prove the differential system
\eqref{deq1}--\eqref{deq2} and deduce third order differential
equations from the computations in the prior section. 
\begin{Theorem}
  The differential sytem \eqref{deq1}--\eqref{deq2} holds.
\end{Theorem}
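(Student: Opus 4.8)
The plan is to obtain the three theta equations of \eqref{deq1}--\eqref{deq2} by logarithmic differentiation of eta-quotient representations for $\theta_2^4,\theta_3^4,\theta_4^4$, and to obtain the equation for $P_2$ from Ramanujan's differential system on the full modular group together with the classical theta representation of the weight-four Eisenstein series. First I would record the eta-quotient identities, valid with $q=e^{2\pi i\tau}$ and $\eta_\ell=\eta(\ell\tau)$,
\[
\theta_2^4=\frac{16\,\eta_4^{8}}{\eta_2^{4}},\qquad
\theta_3^4=\frac{\eta_2^{20}}{\eta_1^{8}\eta_4^{8}},\qquad
\theta_4^4=\frac{\eta_1^{8}}{\eta_2^{4}},
\]
which are classical consequences of the Jacobi triple product (and which can in any case be verified using Lemma~\ref{honda} followed by a finite Fourier-coefficient comparison). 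Applying the standard formula $q\frac{d}{dq}\log\eta_\ell=\frac{\ell}{24}P_\ell$ to each then gives
\[
q\frac{d}{dq}\theta_3^4=\tfrac13\theta_3^4\,(5P_2-P_1-4P_4),\qquad
q\frac{d}{dq}\theta_4^4=\tfrac13\theta_4^4\,(P_1-P_2),\qquad
q\frac{d}{dq}\theta_2^4=\tfrac13\theta_2^4\,(4P_4-P_2).
\]

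Substituting these three formulas into the left-hand sides of the first three equations of \eqref{deq1}--\eqref{deq2}, and on the right-hand sides using \eqref{jac} to write $\theta_i^8-\theta_j^8=\theta_k^4(\theta_i^4-\theta_j^4)$ for $\{i,j,k\}=\{2,3,4\}$, each of those three equations becomes, after cancelling the factor $\theta_i^4$, one of the three weight-two identities
\[
\theta_2^4-\theta_4^4=4P_2-P_1-4P_4,\qquad
\theta_2^4+\theta_3^4=2P_2-P_1,\qquad
\theta_3^4+\theta_4^4=4P_4-2P_2.
\]
The first of these is the difference of the other two, so only two are independent, and each is a classical parameterization of the $\theta_i^4$ (for instance $3\theta_3^4=4P_4-P_1$ is Jacobi's four-square theorem). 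Since $2P_2-P_1$ and $4P_4-P_1$ lie in $M_2(\Gamma_0(4))$ by Lemma~\ref{sch}, this space is two-dimensional, and the left-hand sides lie in $M_2(\Gamma_0(4))$ as well, each identity follows by matching the $q^0$ and $q^1$ Fourier coefficients. Equivalently, once any two of the three theta equations are proved, the third follows by adding or subtracting them and applying \eqref{jac} once more on the right-hand side.

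For the equation governing $P_2$, I would use Ramanujan's relation $q\frac{d}{dq}P=\tfrac1{12}(P^2-Q)$, where $Q(q)=1+240\sum_{j\ge1}j^{3}q^{j}/(1-q^{j})$ is the weight-four Eisenstein series, after replacing $q$ by $q^2$ throughout; the chain rule gives $q\frac{d}{dq}P_2=\tfrac16(P_2^2-Q_2)$ with $Q_2=Q(q^2)$. Substituting the classical theta-series formula for the weight-four Eisenstein series, which in the present normalization reads $Q_2=\tfrac12(\theta_2^8+\theta_3^8+\theta_4^8)$, turns this into precisely the last equation of \eqref{deq2}, completing the proof.

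The argument is conceptually light; the point requiring care is consistency of normalization. One must pin down the relation between the nome of the theta functions and $q=e^{2\pi i\tau}$ so that the exponents in the eta-quotients and the constant $\tfrac{\ell}{24}$ in $q\frac{d}{dq}\log\eta_\ell=\tfrac{\ell}{24}P_\ell$ come out correctly, and one must check that each combination $c_1P_1+c_2P_2+c_4P_4$ invoked above actually lies in $M_2(\Gamma_0(4))$ — the relevant criterion being $4c_1+2c_2+c_4=0$ — before passing from an identity of modular forms to a finite comparison of Fourier coefficients. Granting those verifications, the remaining computations are entirely routine.
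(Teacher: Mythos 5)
Your proof is correct, but it takes a genuinely different route from the paper's. The paper obtains $q\,d\theta_3^4/dq$ by combining Ramanujan's parameterization \eqref{eq:10} of $P(q^2)$ in terms of $x=\theta_2^4/\theta_3^4$ with the inversion-formula derivative \eqref{eq:11}, both quoted from the literature; it then gets the $\theta_4^4$ equation from the symmetry $q\mapsto -q$ (which swaps $\theta_3$ and $\theta_4$ while fixing $\theta_2^8$ and $P_2$) and the $\theta_2^4$ equation from Jacobi's relation \eqref{jac}. You instead take logarithmic derivatives of the classical eta-quotient representations of $\theta_2^4,\theta_3^4,\theta_4^4$, which reduces the first three equations of the system to the weight-two Eisenstein identities $\theta_2^4+\theta_3^4=2P_2-P_1$, $\theta_3^4+\theta_4^4=4P_4-2P_2$ and their difference; these live in the two-dimensional space $M_2(\Gamma_0(4))$ (membership via Lemma \ref{honda} for the theta powers and Lemma \ref{sch} with the constraint $4c_1+2c_2+c_4=0$ for the Eisenstein combinations), so comparison of the $q^0$ and $q^1$ coefficients suffices. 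I checked your logarithmic derivatives ($\tfrac13(5P_2-P_1-4P_4)$, $\tfrac13(P_1-P_2)$, $\tfrac13(4P_4-P_2)$), the factorizations $\theta_i^8-\theta_j^8=\theta_k^4(\theta_i^4-\theta_j^4)$ via \eqref{jac}, and the resulting three identities; all are correct, and the low-order Fourier coefficients match. The treatment of the $P_2$ equation is essentially identical in both arguments. What your approach buys is self-containedness relative to the toolkit the paper already assembles (eta-quotient tests, dimension counts, Sturm-type coefficient comparison) rather than importing the elliptic-integral parameterizations \eqref{eq:10}--\eqref{eq:11}; what the paper's approach buys is brevity given those quoted results and an explicit use of the $q\mapsto-q$ involution, which your argument does not need. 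Your closing caveats about normalization are well placed: the identities $\theta_2^4=16\eta_4^8/\eta_2^4$, $\theta_3^4=\eta_2^{20}/(\eta_1^8\eta_4^8)$, $\theta_4^4=\eta_1^8/\eta_2^4$ do hold with the paper's convention $q=e^{2\pi i\tau}$ in both the theta series \eqref{jac0} and $\eta_\ell=\eta(\ell\tau)$, and $q\frac{d}{dq}\log\eta_\ell=\frac{\ell}{24}P_\ell$ is then immediate, so the verification you defer is routine as claimed.
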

\begin{proof}
%  The null theta functions are weight $1/2$ modular forms for
 % $\Gamma_{0}(2)$ with multiplier ?? (citation needed). Therefore,
 % $\theta_{i}^{4}$ are weight two forms for $\Gamma(2)$. Since
 % $M_{2}(\Gamma(2))$ has dimension $2$, we may deduce from the linear
  %independence that any two functions from the three $\theta_{2}^{4},
  %\theta_{3}^{4}, \theta_{4}^{4}$ form a basis for
  %$M_{2}(\Gamma(2))$. It follows from the $q$-expansions
  %that $\theta_{2}^{8},
  %\theta_{3}^{8}, \theta_{4}^{8}$ are linearly independent over $\Bbb
  %C$. Since $\dim M_{4}(\Gamma(2)) = 3$, we see that $\theta_{2}^{8},
  %\theta_{3}^{8}, \theta_{4}^{8}$ form a basis for
  %$M_{4}(\Gamma(2))$. The first three differential equations from
  %\eqref{deq1}--\eqref{deq2} follow from the fact (see \cite{??})
%$%$q\frac{d}{dq}\theta_{i}^{4} - \theta_{i}^{4}P_{2}\in
%M_{4}(\Gamma(2)), \qquad i = 2,3,4.$$ 
Ramanujan's parameterization 
\cite[Equation 5.4.5]{spiritraman} 
\begin{align}
  \label{eq:10}
P(q^{2}) = (1 - 2x) \theta_{3}^{4} + 6x(1 - x) 
\frac{d\theta_{3}^{2}}{dx},\qquad \quad x =
\frac{\theta_{2}^{4}}{\theta_{3}^{4}}
\end{align}
coupled with Jacobi's relation \eqref{jac} and  \cite{spiritraman}   
\begin{align}
  \label{eq:11}
  q \frac{d x}{dq} = \theta_{3}^{4}x(1-x)
\end{align}
 together imply the claimed expression for $q d\theta_{3}^{4}/dq$. The
 differential equation for $\theta_{4}^4$ follows from that of
 $\theta_{3}^{4}$ since the map $q \mapsto
-q$ fixes $\theta_{2}, P_2$ and interchanges $\theta_{3}$ and
$\theta_{4}$. 
% \begin{align}
 %  \label{eq:12}
  % q\frac{d\theta_{4}^{4}}{dq} =\theta_{3}^{4} \cdot q \frac{d}{dq} \left (
  % \frac{\theta_{4}^{4}}{\theta_{3}^{4}} \right ) +
  % \frac{\theta_{4}^{4}}{\theta_{3}^{4}}\cdot q
  % \frac{d\theta_{4}^{4}}{dq} =-\theta_{3}^{4} \cdot q \frac{dx}{dq} + \frac{\theta_{4}^{4}}{\theta_{3}^{4}}\cdot q \frac{d\theta_{4}^{4}}{dq}
 %\end{align}
We derive the differential equation for $\theta_{2}^{4}$ directly from
those for $\theta_{3}^{4}, \theta_{4}^{4}$ and
Jacobi's relation \eqref{jac}. The expression for $q dP_2/dq$ is equivalent to Ramanujan's differential equation for the Eisenstein
series of weight two on the full modular group. 
\begin{align}
  \label{eq:9}
  q \frac{dP}{dq} = \frac{P^{2} - Q}{12},\qquad Q(q^{2}) = 1 + 240
  \sum_{n=1}^{\infty} \frac{n^{3} q^{2n}}{1 - q^{2n}} = \frac{\theta_{2}^{8} +
  \theta_{3}^{8} + \theta_{4}^{8}}{2}. 
\end{align}
Ramanujan derived the parameterization for $Q(q^{2})$ \cite[Equation 5.4.8] {spiritraman}.
%The last identity follows from the fact that the Eisenstein series $Q(q)$ of
%weight four on the full modular group becomes a modular form for
%$\Gamma(2)$ when $q$ is replaced by $q^{2}$. 
\end{proof}

%To derive differential equations for level $20$ functions from the identities above, we first take quotients of weight two forms. We will show that these modular
%functions are quotients of the level 20 Hauptmodul from the
%Introduction and that the cusp form from \eqref{S20} is the difference
%of the theta series $Z$ and $\mathbf{Z}$.

\begin{Theorem}  \label{Xth} Let $X$ 
%and $\mathbf{X}$ 
be defined by
$$
X=\frac{z}{Z}.
%\quad \mbox{and}\quad \mathbf{X}=\frac{z}{\mathbf{Z}}.
$$
Then 
\label{Xuv}
\begin{align}
\label{Xu}
X&=\frac{u}{(1+u)^{2}}=\frac{v}{(1+v)(1+5v)}.
%\label{bfXu}
%\mathbf{X}&=\frac{u}{(u-1)^{2}}=\frac{v}{1+2v+5v^{2}},\\
%\label{ZZz}
%\mathbf{Z} &= Z - 4 z.
\end{align}
\end{Theorem}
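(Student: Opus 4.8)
The plan is to verify both equalities in \eqref{Xu} by expressing everything as eta-quotients and reducing each claimed identity to an identity between modular functions on $\Gamma_0(20)$, which can be checked on finitely many Fourier coefficients. First I would address the right-hand equality $u/(1+u)^2 = v/((1+v)(1+5v))$, which is purely a statement about the Hauptmodul and does not involve $Z$ at all. Clearing denominators, this is equivalent to
\begin{equation*}
u(1+v)(1+5v) = v(1+u)^2.
\end{equation*}
Using Theorem \ref{vvu} to rewrite $(1+v)$ and $(1+5v)$ as explicit eta-quotients, together with the definitions \eqref{uvdef} of $u$ and $v$, both sides become eta-products; the identity then follows by matching orders of vanishing at all cusps of $\Gamma_0(20)$ (so that the ratio is a holomorphic modular function, hence constant by the valence formula) and comparing leading $q$-expansion coefficients. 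Alternatively, and perhaps more cleanly, one can derive it from the reciprocal relation in Theorem \ref{uvrecip}: writing $s = 1/u + u$ and $r = 1/v + v$, Theorem \ref{uvrecip} gives $s = 1/v + 4 + 5v$, and since $u/(1+u)^2 = 1/(1/u + 2 + u) = 1/(s+2)$ while $v/((1+v)(1+5v)) = 1/(1/v + 4 + 5v + \text{(cross terms)})$, a short computation expanding $(1+v)(1+5v) = 1 + 6v + 5v^2$ and dividing by $v$ yields $1/v + 6 + 5v = s + 2$, which is exactly Theorem \ref{uvrecip}. This is the cleaner route and I would use it.

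For the left-hand equality $X = u/(1+u)^2$, the plan is to show $z(1+u)^2 = Z u$ as modular forms — equivalently $Z u = z(1+u)^2$, where the right side, by \eqref{S20} and Lemma \ref{honda}, should be checked to lie in $M_2(\Gamma_0(20))$. Expanding, $z(1+u)^2 = z + 2zu + zu^2$; since $zu \in M_2(\Gamma_0(20))$ by \eqref{in14} and $zu^2 = (zu)\cdot u$ where $u$ is a Hauptmodul, one checks $zu^2 \in M_2(\Gamma_0(20))$ directly via Lemma \ref{honda} applied to the eta-quotient $zu^2 = \eta_2^2\eta_{10}^2 (\eta_1\eta_{20}/\eta_4\eta_5)^4$. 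Then $z(1+u)^2$ is a specific element of the six-dimensional space $M_2(\Gamma_0(20))$, so by \eqref{M20} it has a unique expansion in the basis $\{2P_2 - P_1, 4P_4 - P_1, 5P_5 - P_1, 10P_{10} - P_1, 20P_{20} - P_1, z\}$; similarly $Z u$ — after checking $Z \in M_2(\Gamma_0(20))$ (it is a weight-two theta series for the genus-one class of discriminant $-20$, and $Zu$ is again verified to be a modular form on $\Gamma_0(20)$, e.g. by writing $Zu$ in terms of the spanning set or by a direct eta-quotient check on the relevant piece) — has such an expansion. Matching the two expansions, which amounts to comparing a handful of leading Fourier coefficients, gives $Zu = z(1+u)^2$, and dividing by $Z(1+u)^2$ yields $X = u/(1+u)^2$.

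The main obstacle I anticipate is handling $Z$ itself: unlike $z$, $u$, $v$, the form $Z$ is an $SL_2(\mathbb{Z})$-theta series attached to the quadratic form $m^2 + 5n^2$, and one must pin down its behavior on $\Gamma_0(20)$ — in particular confirm $Z \in M_2(\Gamma_0(20))$ and, more delicately, that $Zu$ (which has $u$ with a pole at some cusps) is still holomorphic there, so that the finite-dimensional linear-algebra argument applies. This requires a cusp-by-cusp order-of-vanishing analysis: at each of the cusps of $\Gamma_0(20)$ one computes $\mathrm{ord}(Z)$ from the standard theory of theta series with characteristics and $\mathrm{ord}(u)$ from the eta-quotient formula, and checks the sum is $\ge 0$. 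Once holomorphy is established, the rest is bookkeeping: reduce to a coefficient comparison in a $6$-dimensional space, which is routine. A secondary check is the internal consistency of the two formulas for $X$, but this is automatic once the right-hand equality of \eqref{Xu} (already argued above via Theorem \ref{uvrecip}) is in hand.
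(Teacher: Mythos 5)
Your treatment of the second equality is fine and coincides with the paper's: since $(1+u)^2/u=\tfrac{1}{u}+2+u$ and $(1+v)(1+5v)/v=\tfrac{1}{v}+6+5v$, the two expressions for $X$ agree precisely because of Theorem \ref{uvrecip}. The gap is in the first equality. You propose to prove $Zu=z(1+u)^2$ by placing both sides in the six-dimensional space $M_{2}(\Gamma_{0}(20))$ and comparing expansions in the basis \eqref{M20}, and you correctly flag holomorphy of $zu^{2}$ and $Zu$ at the cusps as the point needing verification. That verification fails. Applying the cusp criterion (condition (5) of Lemma \ref{honda}) to $zu^{2}=\eta_{1}^{4}\eta_{2}^{2}\eta_{4}^{-4}\eta_{5}^{-4}\eta_{10}^{2}\eta_{20}^{4}$ with $d=4$ gives
$$
4+4-16-\tfrac{4}{5}+\tfrac{4}{5}+\tfrac{16}{5}=-\tfrac{24}{5}<0,
$$
so $zu^{2}$ has a pole at the cusps with denominator $4$ and is not in $M_{2}(\Gamma_{0}(20))$. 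Correspondingly $u$ has a pole there while $Z$ is nonvanishing there, so $Zu$ is not holomorphic at those cusps either. Both sides of your proposed identity lie outside $M_{2}(\Gamma_{0}(20))$, and the finite-dimensional linear algebra you invoke does not apply to them. (The identity is still true as one between meromorphic forms, but your method cannot establish it as stated.)

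The repair is to not multiply through by $u$: write the target instead as $Z=z(1+u)^{2}/u=zu+2z+z/u$. Every term on the right is already known to lie in $M_{2}(\Gamma_{0}(20))$ by \eqref{in14}, and Theorem \ref{7ec} gives each one explicitly in the basis \eqref{M20}; likewise $Z\in M_{2}(\Gamma_{0}(20))$ has an expansion in that basis determined by a few Fourier coefficients. Summing the three expansions from Theorem \ref{7ec} and comparing with that of $Z$ yields $Z=zu+2z+z/u$, hence $X=z/Z=u/(1+u)^{2}$, and the second equality then follows from Theorem \ref{uvrecip} as you argued. This is exactly the paper's proof, and it avoids the holomorphy obstruction entirely because the decomposition $\tfrac{1}{u}+2+u$ keeps every summand inside the space where the dimension count is valid.
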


\begin{proof}
Since $Z\in M_{2}(\Gamma_{0}(20))$, then by \eqref{M20}, it is easy to show that 
$$
Z=-\frac{1}{18}P(q)+\frac{2}{9}P(q^{4})-\frac{5}{18}P(q^{5})+\frac{10}{9}P(q^{20})+\frac{8}{3}z.
$$
By Theorem \ref{7ec}, it follows that
$$
zu+\frac{z}{u}+2z=\frac{\eta_{1}^{24}\eta_{4}^{24}}{\eta_{2}^{48}}.
$$
Therefore
$$
Z=zu+\frac{z}{u}+2z=z\frac{(1+u)^{2}}{u},
$$
which implies the first equality of \eqref{Xu}. The second equality follows from
Theorem~\ref{uvrecip}. 
%The identity \eqref{bfXu} can be proved in the same fashion, so we omit the details. The identity \eqref{ZZz} follows from \eqref{Xu} and \eqref{bfXu} together with the definitions of $X$ and $\mathbf{X}$.
\end{proof}

\begin{Theorem}
\begin{align}
\label{du}
q\frac{d}{dq}\log{u}&=\frac{z}{u}\sqrt{1-8u-2u^{2}-8u^{3}+u^{4}},\\
\label{dX}
q\frac{d}{dq}\log{X}&=Z\sqrt{(1-4X)(1-12X+16X^{2})}.
%\label{dXa}
%q\frac{d}{dq}\log{\mathbf{X}}&=\mathbf{Z}\sqrt{(1+4\mathbf{X})(1-4\mathbf{X}-16\mathbf{X}^{2})}.
\end{align}
\end{Theorem}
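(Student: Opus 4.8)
The plan is to derive both formulas by the same mechanism: identify each logarithmic derivative as a weight-two modular form, then pin it down using the known representations of the Hauptmodul and of $z$, $Z$. First I would treat \eqref{du}. Since $u$ is a Hauptmodul for a genus-zero group containing $\Gamma_0(20)$, the function $q\,\frac{d}{dq}\log u$ is a weight-two meromorphic modular form on $\Gamma_0(20)$ whose divisor is controlled by the zeros and poles of $u$; multiplying by $u/z$ (with $z=\eta_2^2\eta_{10}^2$ the unique normalized cusp form) should produce something holomorphic, hence in $M_2(\Gamma_0(20))$. Concretely, $\frac{u}{z}\,q\frac{d}{dq}\log u = \frac{1}{z}\,q\frac{du}{dq}$, and using the eta-quotient definition \eqref{uvdef} one computes $q\frac{du}{dq}$ via $q\frac{d}{dq}\log\eta(\delta\tau)=\frac{\delta}{24}P(q^\delta)$, giving $q\frac{d}{dq}\log u = \frac{1}{12}\bigl(P_1-4P_4-5P_5+20P_{20}\bigr)$, which by Lemma~\ref{sch} and \eqref{E20} already lies in $M_2(\Gamma_0(20))$ after multiplication by nothing at all — so in fact $\frac{u}{z}\,q\frac{d}{dq}\log u\in M_2(\Gamma_0(20))$ follows from \eqref{E20}. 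The right-hand side $\sqrt{1-8u-2u^2-8u^3+u^4}$ I would recognize as arising from the discriminant-type relation attached to $t_{20+}=\frac1u-2+u$: indeed $1-8u-2u^2-8u^3+u^4 = u^2\bigl((\frac1u+u)^2-8(\frac1u+u)-4\bigr)$ is a polynomial in the Hauptmodul, so its square root is again a weight-zero-type quantity up to the factor $u$. To finish \eqref{du} it then suffices to verify the identity
\begin{equation}
\Bigl(\tfrac{1}{12}(P_1-4P_4-5P_5+20P_{20})\Bigr)^2 = \frac{z^2}{u^2}\bigl(1-8u-2u^2-8u^3+u^4\bigr) \notag
\end{equation}
as an identity in $M_4(\Gamma_0(20))$; both sides are holomorphic modular forms of weight four, so by \eqref{M420} it is enough to match finitely many $q$-expansion coefficients (the dimension is $11$, so matching the first $11$ or so coefficients suffices). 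The eta-quotient $z^2/u^2 = \eta_4^4\eta_5^4/\eta_2^2\eta_{10}^2$ times the polynomial in $u$ expands straightforwardly, and the left side is a square of an explicit Eisenstein combination.

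For \eqref{dX}, I would change variables using Theorem~\ref{Xth}: since $X=z/Z$ and $X=\frac{u}{(1+u)^2}$, we get $\log X = \log u - 2\log(1+u)$, so $q\frac{d}{dq}\log X = \bigl(q\frac{d}{dq}\log u\bigr)\cdot\frac{1-u}{1+u}$ after using $q\frac{d}{dq}\log(1+u) = \frac{u}{1+u}\,q\frac{d}{dq}\log u$. Substituting \eqref{du},
\begin{equation}
q\frac{d}{dq}\log X = \frac{z}{u}\cdot\frac{1-u}{1+u}\sqrt{1-8u-2u^2-8u^3+u^4}. \notag
\end{equation}
Now $Z = z(1+u)^2/u$ from the proof of Theorem~\ref{Xth}, so $\frac{z}{u}\cdot\frac{1-u}{1+u} = \frac{Z}{(1+u)^2}\cdot\frac{1-u}{1+u}\cdot\frac{u}{u}$... more cleanly, $\frac{z}{u(1+u)} = \frac{Z}{(1+u)^3}$ and $\frac{z(1-u)}{u(1+u)} = \frac{Z(1-u)}{(1+u)^3}$. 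It remains to check the purely algebraic identity
\begin{equation}
\frac{(1-u)^2}{(1+u)^6}\bigl(1-8u-2u^2-8u^3+u^4\bigr) = (1-4X)(1-12X+16X^2) \notag
\end{equation}
under the substitution $X = u/(1+u)^2$. This is a rational-function identity in the single variable $u$: clearing denominators, $(1-4X) = (1-u)^2/(1+u)^2$ immediately, and $1-12X+16X^2 = \bigl((1+u)^2-12u\bigr)\bigl(\text{...}\bigr)/(1+u)^4$ should factor to match $\frac{1-8u-2u^2-8u^3+u^4}{(1+u)^2}$ after simplification; this is a finite polynomial check of degree at most $6$ in $u$.

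The main obstacle I anticipate is not conceptual but the algebraic verification of the two polynomial/rational identities in $u$ — in particular confirming the exact constants in $1-8u-2u^2-8u^3+u^4$ and in $(1-4X)(1-12X+16X^2)$, and confirming the precise Eisenstein combination for $q\frac{d}{dq}\log u$ (the coefficients $1,-4,-5,20$ must satisfy $20\cdot1+5\cdot(-4)+4\cdot(-5)+1\cdot20 = 0$, which checks, consistent with \eqref{E20}). Since every object in sight is a modular form of explicitly bounded weight and level, each identity reduces to comparing a controlled finite number of $q$-expansion coefficients against the bases \eqref{M20} and \eqref{M420}, so there is no genuine difficulty in principle; the risk is purely bookkeeping. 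A secondary point requiring care is the correct choice of sign/branch for the square roots, which I would fix by demanding that both sides have $q$-expansions beginning $1+O(q)$ (for $q\frac{d}{dq}\log u$, which starts $\sim c\,q$, one instead matches leading behavior after dividing through by the appropriate eta-quotient).
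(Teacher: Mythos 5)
Your argument is correct in outline and shares its first step with the paper: both begin from $q\frac{d}{dq}\log u=\frac{1}{12}\left(P_1-4P_4-5P_5+20P_{20}\right)$, obtained by logarithmically differentiating the eta-quotient, and your derivation of \eqref{dX} from \eqref{du} via $X=u/(1+u)^2$, $Z=z(1+u)^2/u$ and the polynomial identities $1-4X=(1-u)^2/(1+u)^2$ and $1-12X+16X^2=(1-8u-2u^2-8u^3+u^4)/(1+u)^4$ is exactly the paper's change of variables. Where you genuinely diverge is the middle of \eqref{du}: you square the identity and verify it as an equality in $M_4(\Gamma_0(20))$ by a finite coefficient check, then extract the square root. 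The paper instead uses Theorem~\ref{7ec} to rewrite the Eisenstein combination exactly as $-5zv+z/v=z\left(\tfrac1v-5v\right)$, and then Theorem~\ref{uvrecip} gives $\left(\tfrac1v-5v\right)^2=\left(u+\tfrac1u-4\right)^2-20=(1-8u-2u^2-8u^3+u^4)/u^2$, so the square root appears with its sign already determined and no weight-four computation is needed. Your route is valid --- the right-hand side $\tfrac{z^2}{u^2}(1-8u-2u^2-8u^3+u^4)$ does lie in $M_4(\Gamma_0(20))$ by \eqref{in14} --- but it costs you the branch-of-square-root discussion and a coefficient verification that the paper avoids entirely; the paper's route is purely algebraic once Theorems~\ref{7ec} and~\ref{uvrecip} are in hand. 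Two small corrections to your bookkeeping: the number of coefficients to match should be governed by the Sturm bound (here $\lfloor 4\cdot 36/12\rfloor=12$ for weight $4$ on $\Gamma_0(20)$), not by the dimension of the space; and $z^2/u^2=\eta_2^4\eta_4^4\eta_5^4\eta_{10}^4/(\eta_1^4\eta_{20}^4)$, not $\eta_4^4\eta_5^4/(\eta_2^2\eta_{10}^2)$. Neither affects the validity of the approach.
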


\begin{proof}
It is easy to verify that
$$q\frac{d}{dq}\log{u}=\frac{1}{12}P(q)-\frac{1}{3}P(q^{4})-\frac{5}{12}P(q^{5})+\frac{5}{3}P(q^{20}).$$
By Theorem \ref{7ec}, we have
$$
q\frac{d}{dq}\log{u}=-5zv+\frac{z}{v}.
$$
Then together with Theorem \ref{uvrecip}, the above identity implies \eqref{du}. 
By Theorem \ref{Xuv}, we have
$$
X=\frac{u}{(1+u)^{2}} \quad\mbox{and}\quad Z=z\frac{(1+u)^{2}}{u}.
$$
The differentiation formula \eqref{dX} follows from \eqref{du} via
changes of variables. 
%Similarly, we can deduce \eqref{dXa}.
\end{proof}

\begin{Theorem} 
\begin{align}
\label{ZXeq}
X^{2}(1-&4X)(1-12X+16X^{2})\frac{d^{3}Z}{dX^{3}}+3X(1-24X+128X^{2}-160X^{3})\frac{d^{2}Z}{dX^{2}}
\nonumber \\
&+(1-56X+464X^{2}-784X^{3})\frac{dZ}{dX}-4(1-20X+54X^{2})Z=0.
%\label{Zaeq}
%\mathbf{X}^{2}(1-&4\mathbf{X}-16\mathbf{X}^{2})(1+4\mathbf{X})^{3}\frac{d^{3}\mathbf{Z}}{d\mathbf{X}^{3}}+
%3\mathbf{X}(1-64\mathbf{X}^{2}-160\mathbf{X}^{3})(1+4\mathbf{X})^{2}\frac{d^{2}\mathbf{Z}}{d\mathbf{X}^{2}}\\
%&+(1+4\mathbf{X})(1+4\mathbf{X}-224\mathbf{X}^{2}-1680\mathbf{X}^{3}-3072\mathbf{X}^{4})\frac{d\mathbf{Z}}{d\mathbf{X}}\nonumber\\
%&-8\mathbf{X}(1+8\mathbf{X})(4+27\mathbf{X}+48\mathbf{X}^{2})=0.\nonumber
\end{align}
\end{Theorem}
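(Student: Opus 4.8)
The plan is to derive \eqref{ZXeq} as an ODE satisfied by $Z$ viewed as a function of the modular function $X$, exploiting the fact that $Z$ is a weight-two modular form on $\Gamma_0(20)$ and $X$ is a Hauptmodul-type function on a suitable intermediate group. The key observation is that we already have the first-order differentiation formula \eqref{dX}, namely $q\frac{d}{dq}\log X = Z\sqrt{(1-4X)(1-12X+16X^2)}$, which we may rewrite as $q\frac{dX}{dq} = XZ\,R(X)^{1/2}$ with $R(X)=(1-4X)(1-12X+16X^2)$. This converts the operator $q\frac{d}{dq}$ into $XZ R^{1/2}\frac{d}{dX}$, so the heart of the matter is to compute $q\frac{dZ}{dq}$ in terms of $Z$, $X$, and possibly one auxiliary weight-two form, and then to iterate.

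First I would establish a companion first-order formula for $q\frac{dZ}{dq}$. Since $Z\in M_2(\Gamma_0(20))$ and the logarithmic derivative $q\frac{d}{dq}$ raises weight by two, $\frac{1}{Z}q\frac{dZ}{dq}$ is a weight-two meromorphic modular form, hence (after dividing by $Z$ again) a rational function of $X$; concretely I expect something of the shape $q\frac{dZ}{dq} = Z^2\bigl(a + bX\bigr) + \frac{1}{12}Z\cdot(\text{Eisenstein correction})$, but the cleanest route is to work with the two independent weight-two objects $Z$ and, say, $z$ (or equivalently $ZX = z$, using $X=z/Z$). Indeed from $X=z/Z$ we get $q\frac{d}{dq}\log X = q\frac{d}{dq}\log z - q\frac{d}{dq}\log Z$, and one can pin down $q\frac{d}{dq}\log z$ and $q\frac{d}{dq}\log Z$ separately by expressing each as a combination of the $P_\ell$'s (just as was done for $\log u$ in the proof of \eqref{du}, and for $Z$ itself in the proof of Theorem \ref{Xuv}), then re-expressing via Theorem \ref{7ec} and Theorem \ref{uvrecip} in terms of $z$, $v$ and ultimately $Z$, $X$. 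This gives a closed first-order system
\begin{align*}
q\frac{dX}{dq} &= XZ\sqrt{R(X)}, & q\frac{dZ}{dq} &= Z^2\,g(X)
\end{align*}
for an explicit rational (in fact polynomial, low-degree) function $g(X)$.

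With that system in hand the derivation of \eqref{ZXeq} is purely mechanical: from the first equation, $\frac{dZ}{dX} = \frac{q\,dZ/dq}{q\,dX/dq} = \frac{Z\,g(X)}{X\sqrt{R(X)}}$, so $Z = Z(X)$ is determined up to scale by a first-order ODE, and differentiating twice more — using $q\frac{d}{dq} = XZ\sqrt{R}\,\frac{d}{dX}$ repeatedly and the chain rule, clearing the $\sqrt{R}$ factors by squaring at the appropriate stage — produces a linear third-order ODE with rational-function coefficients in $X$. One then checks that clearing denominators yields exactly the polynomial coefficients $X^2(1-4X)(1-12X+16X^2)$, $3X(1-24X+128X^2-160X^3)$, $(1-56X+464X^2-784X^3)$, $-4(1-20X+54X^2)$. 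The main obstacle is the bookkeeping in identifying $g(X)$ correctly — getting the weight-two differentiation formula for $Z$ with the right rational function of $X$, since an error there propagates through all three differentiations; a useful consistency check is that the hypergeometric-type indicial equation of \eqref{ZXeq} at $X=0$ must match the $q$-expansion leading behavior $Z=1+O(q)$, $z=q+O(q^2)$, i.e. $X=q+O(q^2)$, so $Z$ as a series in $X$ starts $1+O(X)$ and the ODE must admit this holomorphic solution. Alternatively, and perhaps more robustly, one can simply verify \eqref{ZXeq} directly: substitute the known $q$-expansions of $Z$ and $X$ to high order, confirm the left-hand side vanishes to that order, and invoke the fact that both sides are modular forms of the same (meromorphic) weight on $\Gamma_0(20)$ to promote the numerical check to an identity — but the structural derivation above is preferable as it explains where the coefficients come from.
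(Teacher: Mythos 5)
Your proposed route is genuinely different from the paper's, but it breaks down at its central step. You assert that because $Z\in M_2(\Gamma_0(20))$ the quotient $\tfrac{1}{Z}q\tfrac{dZ}{dq}$ is a weight-two meromorphic modular form, and hence that $q\tfrac{dZ}{dq}=Z^2g(X)$ for some rational $g$. This is false: differentiation preserves modularity only for weight-zero modular functions; for forms of positive weight it produces quasimodular objects. Concretely, $q\tfrac{d}{dq}\log z=\tfrac{1}{6}P_2+\tfrac{5}{6}P_{10}$, whose coefficients violate the constraint $20c_1+10c_2+5c_4+4c_5+2c_{10}+c_{20}=0$ of Theorem \ref{th14}, so it is not in $M_2(\Gamma_0(20))$; since $q\tfrac{d}{dq}\log X$ \emph{is} modular of weight two by \eqref{dX}, the difference $q\tfrac{d}{dq}\log Z=q\tfrac{d}{dq}\log z-q\tfrac{d}{dq}\log X$ carries a nonvanishing quasimodular anomaly and cannot be written through Theorem \ref{7ec} as $Z$ times a rational function of $X$: a multiple of $P_1$ survives. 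Had your closed first-order system existed, $Z$ would satisfy a first-order linear ODE over $\mathbb{C}\bigl(X,\sqrt{R(X)}\bigr)$ and the operator in \eqref{ZXeq} would admit a first-order right factor; but the solution space of \eqref{ZXeq} is spanned by $Z$, $Z\tau$, $Z\tau^2$ (the symmetric-square structure on which the $1/\pi$ series of Section 4 depend), and $\tau$ is not algebraic over $\mathbb{C}(X)$, so no such factorization is possible. The surviving quasimodular quantity is precisely the reason a \emph{third}-order equation is required: one must carry $P$ along as an extra variable obeying its own Riccati equation and eliminate it only after three differentiations.

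That elimination is what the paper actually performs, one level down: using the system \eqref{deq1}--\eqref{deq2} it computes $dF/dT$, $d^2F/dT^2$, $d^3F/dT^3$ for $F=\theta_3^4$ and $T=\theta_2^4\theta_4^4/(16\theta_3^8)$ as rational expressions in $\theta_2^4,\theta_3^4,\theta_4^4,P_2$, eliminates $P_2$ with the help of Jacobi's identity \eqref{jac} to obtain the level-four equation \eqref{FTeq}, and then transports \eqref{FTeq} to level $20$ via the explicit changes of variable $F=z(1+5v)^2/v$, $T=v/\bigl((1+v)(1+5v)^5\bigr)$ and $Z=z(1+v)(1+5v)/v$, $X=v/\bigl((1+v)(1+5v)\bigr)$ furnished by Theorems \ref{7ec} and \ref{Xuv}. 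Your fallback of verifying \eqref{ZXeq} from $q$-expansions can be made rigorous, but not by appealing to ``modular forms of the same weight'': the left-hand side is quasimodular of mixed depth, and one must first place it (after clearing the factors of $q\,dX/dq$) in an explicit finite-dimensional space of quasimodular forms of bounded weight and depth, and check coefficients up to the corresponding Sturm-type bound. As written, neither your structural derivation nor your verification closes.
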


\begin{proof}
Let $F$ and $T$ be defined by
$$
F= \theta_{3}^{4}\quad\mbox{and}\quad T=\frac{t}{(1+16t)^{2}}=\frac{\theta_{2}^{4}\theta_{4}^{4}}{16\theta_{3}^{8}}.
$$
From the differential system \eqref{deq1}--\eqref{deq2} and Jacobi's quartic identity, 

\begin{align}
  \label{eq:7}
 q \frac{dT}{dq}=\frac{\theta_{3}^{4}}{16}\frac{\theta_{2}^{4}\theta_{4}^{4}}{\theta_{3}^{8}}\left(1-2\frac{\theta_{2}^{4}}{\theta_{3}^{4}}\right)=\frac{\theta_{2}^{4}\theta_{4}^{4}}{16\theta_{3}^{4}}\left(1-2\frac{\theta_{2}^{4}}{\theta_{3}^{4}}\right). 
\end{align}
By the chain rule, 
\begin{align}
  \label{eq:8}
\frac{dF}{dT}=\left.\frac{dF}{dq}\right/\frac{dT}{dq}=\frac{16\theta_{3}^{8}\left(\theta_{2}^{8}-\theta_{4}^{8}-\theta_{3}^{4}P_{2}\right)}{3\theta_{2}^{4}\theta_{4}^{4}(\theta_{3}^{4}-2\theta_{2}^{4})}.  
\end{align}

Likewise, $d^{2}F/dT^{2},d^{3}F/dT^{3}$ may be expressed by way of the
chain rule as rational functions of
$\theta_{2}^{4},\theta_{3}^{4},\theta_{4}^{4},P_{2}$. By applying
Jacobi's quartic identity \eqref{jac}, we derive 
\begin{equation}
\label{FTeq}
T^{2}(1-64T)\frac{d^{3}F}{dT^{3}}+3T(1-96T)\frac{d^{2}F}{dT^{2}}+(1-208T)\frac{dF}{dT}=8F.
\end{equation}
Theorem \ref{7ec} and the Jacobi triple product identity \cite{spiritraman} for
the theta functions imply
$$
F=z\frac{(1+5v)^{2}}{v}\quad\mbox{and}\quad T=\frac{v}{(1+v)(1+5v)^{5}}.
$$
Making use of these changes of variables and chain rule, the differential
equation \eqref{FTeq} implies that $z$ satisfies a third order
differential equation with respect to $v$. This implies \eqref{ZXeq}
by the change of variables from Theorem \ref{Xuv}
$$
Z=z\frac{(1+v)(1+5v)}{v}\quad\mbox{and}\quad X=\frac{v}{(1+v)(1+5v)}.$$
%Similarly, we can deduce \eqref{Zaeq} using the changes of variables
%$$
%\qquad \qquad \mathbf{Z}=z\frac{1+2v+5v^{2}}{v}\quad\mbox{and}\quad\mathbf{X}=\frac{v}{1+2v+5v^{2}}. 
%\hspace{0.5in} \qedhere$$
%by Theorem \ref{Xuv}.

\end{proof}

Writing $Z_{X} = X \frac{d}{dX}$, the last theorem takes
a more compact form. 
\begin{Theorem} \label{deqc}
  \begin{align*}
  4 x \left(54 x^2-20 x+1\right)Z&+16 x \left(27 x^2-13 
    x+1\right)Z_{X}+24 x (2 x-1) (6 x-1)Z_{XX} \\ &+(4 x-1) \left(16 
    x^2-12 x+1\right)Z_{XXX} = 0.     
  \end{align*}
\end{Theorem}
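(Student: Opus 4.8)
The statement of Theorem~\ref{deqc} is nothing more than equation \eqref{ZXeq} rewritten with the Euler (homogeneity) operator $Z_{X}=X\frac{d}{dX}$ in place of ordinary derivatives, so the plan is simply to convert one form into the other. Reading $x$ as $X$ throughout, write $\theta=X\frac{d}{dX}$ and denote ordinary derivatives by primes. The standard expansions
\[
\theta = X\frac{d}{dX},\qquad \theta^{2}=X\frac{d}{dX}+X^{2}\frac{d^{2}}{dX^{2}},\qquad \theta^{3}=X\frac{d}{dX}+3X^{2}\frac{d^{2}}{dX^{2}}+X^{3}\frac{d^{3}}{dX^{3}}
\]
(the coefficients being Stirling numbers of the second kind) express $Z_{X},Z_{XX},Z_{XXX}$ as explicit polynomial combinations of $Z',Z'',Z'''$.

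Substituting these into the left-hand side of the asserted identity and collecting the coefficients of $Z,Z',Z'',Z'''$ produces a third order linear operator applied to $Z$ with polynomial coefficients in $X$. The one piece of algebra worth isolating is the factorization
\[
(4X-1)(16X^{2}-12X+1)=-(1-4X)(1-12X+16X^{2}),
\]
after which a direct term-by-term comparison shows that the resulting operator is precisely $-X$ times the operator in \eqref{ZXeq}. Hence the compact identity of Theorem~\ref{deqc} holds if and only if \eqref{ZXeq} holds, and \eqref{ZXeq} was established in the preceding theorem.

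There is essentially no obstacle here: the only work is the bookkeeping of expanding the four polynomial coefficients of Theorem~\ref{deqc} and verifying that they coincide, respectively, with $-X$ times $X^{2}(1-4X)(1-12X+16X^{2})$, $3X(1-24X+128X^{2}-160X^{3})$, $1-56X+464X^{2}-784X^{3}$, and $-4(1-20X+54X^{2})$ from \eqref{ZXeq}. One could instead avoid invoking \eqref{ZXeq} and rederive the $\theta$-form directly, since $X\frac{d}{dX}$ and $q\frac{d}{dq}$ differ by the factor $q\frac{d}{dq}\log X$ computed in \eqref{dX}; but tracking powers of that (square-root-valued) factor through the chain-rule argument is strictly more laborious than simply multiplying the already-proved equation by $-X$ and re-expressing the derivatives.
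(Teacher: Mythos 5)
Your proposal is correct and matches the paper's (implicit) argument: the paper offers no separate proof, presenting the theorem as an immediate rewriting of \eqref{ZXeq} via the operator $X\frac{d}{dX}$, which is exactly what you carry out. Your expansion of $\theta,\theta^{2},\theta^{3}$ and the identification of the resulting operator as $-X$ times the one in \eqref{ZXeq} checks out coefficient by coefficient.
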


Theorem \ref{deqc} induces a power series expansion for $Z$ in
terms of $X$.

\begin{Corollary} For $|X| < \frac{1}{8}
  \left(3-\sqrt{5}\right)$,  $$Z = \sum_{n=0}^{\infty} a_{n} X^{n},$$
  where $a_{0} = 1, a_{1} = 4, a_{2} = 20$ and
  \begin{align*}
 a_{n+1} = 4 \frac{ (2 n+1) \left(2 n^2+2 n+1\right)}{(n+1)^3} a_n -16 \frac{n \left(4 n^2+1\right)}{(n+1)^3} a_{n-1}+8 \frac{(2 n-1)^3}{(n+1)^3} a_{n-2} .
     \end{align*}

\end{Corollary}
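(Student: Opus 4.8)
The plan is to convert the third-order linear ODE of Theorem~\ref{deqc} into a recurrence for the Taylor coefficients of $Z$ about $X=0$, and then to verify that the recurrence is well-posed (i.e. the leading coefficient of $a_{n+1}$ never vanishes) and that the stated initial data propagates consistently. First I would substitute the formal series $Z=\sum_{n\ge 0}a_nX^n$ into the operator identity
$$
(4X-1)(16X^2-12X+1)Z_{XXX}+24X(2X-1)(6X-1)Z_{XX}+16X(27X^2-13X+1)Z_X+4X(54X^2-20X+1)Z=0,
$$
using $Z_X=\sum n a_nX^n$, $Z_{XX}=\sum n^2a_nX^n$, $Z_{XXX}=\sum n^3a_nX^n$ (the Euler-operator form makes each $Z_X$-power act diagonally as multiplication by the appropriate power of $n$, which is why the recurrence comes out with $n$-polynomial coefficients rather than falling-factorial coefficients). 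Expanding each polynomial-times-series product shifts indices by $0,1,2,3$, so collecting the coefficient of $X^{n+1}$ gives a four-term relation among $a_{n+1},a_n,a_{n-1},a_{n-2}$. The coefficient of $a_{n+1}$ comes solely from the ``$1$''-terms $-Z_{XXX}+Z_X$ hitting $X^{n+1}$ (all other monomials in the polynomial coefficients carry at least one factor of $X$ and so contribute to lower-index $a$'s), giving $-(n+1)^3a_{n+1}+(n+1)a_{n+1}$... wait, more carefully the constant terms of the four polynomial coefficients are $1,0,1\cdot? ,0$ for $Z_{XXX},Z_{XX},Z_X,Z$ respectively after reading off $(4X-1)(16X^2-12X+1)$ has constant term $-1$, $16X(27X^2-13X+1)$ has no constant term, etc.; so the $a_{n+1}$ coefficient is $-(n+1)^3$, matching the $(n+1)^3$ denominator in the claimed recurrence. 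One then reads off the coefficients of $a_n,a_{n-1},a_{n-2}$ by the same bookkeeping and checks they equal $4(2n+1)(2n^2+2n+1)$, $-16n(4n^2+1)$, and $8(2n-1)^3$ respectively; this is the routine-but-lengthy computation I would carry out on paper and present compressed.

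Next I would address well-posedness: since the coefficient of $a_{n+1}$ is $-(n+1)^3\ne 0$ for all $n\ge 0$, the recurrence uniquely determines every $a_{n+1}$ from the three preceding values, so once $a_0,a_1,a_2$ are pinned down the entire sequence is forced. To pin down the initial data I would note that $Z=z/(Z_{\text{form}}\cdot\text{trivial})$... rather, recall from Theorem~\ref{Xth} that $X=z/Z$ with $z=\eta_2^2\eta_{10}^2=q+O(q^2)$ and $Z=(\sum q^{m^2+5n^2})^2=1+2q+2q^2+\cdots$ wait — I need the normalization so that $Z$ as a function of $X$ has $Z|_{X=0}=1$. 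The cleanest route is: $X$ is a modular function vanishing to first order at $i\infty$ (since $X=z/Z$, $z=q+\cdots$, $Z=1+\cdots$), so $X=q+O(q^2)$ is a valid local coordinate, and $Z=1+4q+\cdots$ expressed as a series in $X$ begins $1+4X+20X^2+\cdots$; I would extract $a_0=1$, $a_1=4$, $a_2=20$ either by directly composing the $q$-expansions of $Z$ and of $q=q(X)$ (invert $X=q+O(q^2)$), or simply by plugging the ansatz into the ODE and observing that the $X^0$ coefficient of the ODE forces $a_0$ to be a free parameter fixed by normalization, the $X^1$ coefficient relates $a_1$ to $a_0$, the $X^2$ coefficient relates $a_2$ to $a_1,a_0$ — i.e. the recurrence itself, read at $n=0$ and $n=1$ (with $a_{-1}=a_{-2}=0$), already yields $a_1=4a_0$ and $a_2=(4\cdot 3\cdot 5/8)a_1-0+0$; combined with $a_0=1$ from $Z|_{X=0}=1$ this gives the stated values, so only $a_0=1$ needs an external input.

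Finally I would justify the radius of convergence $|X|<\tfrac18(3-\sqrt5)$. The ODE has regular singular points exactly where the leading coefficient $(4X-1)(16X^2-12X+1)$ vanishes, i.e. at $X=\tfrac14$ and at $X=\tfrac{12\pm\sqrt{144-64}}{32}=\tfrac{3\pm\sqrt5}{8}$; the singularity nearest the origin is $X=\tfrac{3-\sqrt5}{8}\approx 0.0955$ (smaller than $\tfrac14$ and than $\tfrac{3+\sqrt5}{8}$), and $X=0$ is an ordinary point (leading coefficient nonzero there), so by the standard theory of linear ODEs with analytic coefficients the unique holomorphic solution normalized by $Z(0)=1$ converges in the disc of radius equal to the distance to the nearest singularity, namely $\tfrac18(3-\sqrt5)$.

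The main obstacle is not conceptual but the sheer bookkeeping in extracting the three lower-order coefficients of the recurrence correctly — each of the four polynomial coefficients of the ODE, multiplied against $Z_{XXX},Z_{XX},Z_X,Z$ and re-indexed, contributes to the $a_n$, $a_{n-1}$, $a_{n-2}$ terms, and the polynomial identities $4(2n+1)(2n^2+2n+1)$, $16n(4n^2+1)$, $8(2n-1)^3$ must be matched exactly; a secondary subtlety is making sure the $n=0,1$ edge cases of the recurrence (with $a_{-1},a_{-2}$ set to $0$) are consistent with the ODE's $X^1$ and $X^2$ coefficients rather than imposing extra constraints, which they are precisely because those are the ``low'' instances of the same recurrence. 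I would therefore organize the proof as: (1) derive the recurrence by coefficient comparison; (2) observe the leading coefficient $-(n+1)^3$ is nonvanishing, giving uniqueness; (3) fix $a_0=1$ from $Z(0)=1$ and deduce $a_1,a_2$ from the recurrence at $n=0,1$; (4) locate the singularities of the ODE to get the radius of convergence.
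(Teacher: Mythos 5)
Your overall strategy --- substitute $Z=\sum a_nX^n$ into Theorem~\ref{deqc} using the Euler-operator form so that $Z_X,Z_{XX},Z_{XXX}$ act diagonally as $n,n^2,n^3$, extract the four-term recurrence from the coefficient of $X^{n+1}$, note the leading coefficient $-(n+1)^3$ never vanishes, fix $a_0=1$ from the $q$-expansions of $z$ and $Z$, and read the radius of convergence off the singularities of the equation --- is exactly what the paper intends (it offers no written proof, only the remark that Theorem~\ref{deqc} ``induces'' the expansion), and your coefficient bookkeeping does produce the stated polynomials $4(2n+1)(2n^2+2n+1)$, $-16n(4n^2+1)$, $8(2n-1)^3$. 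However, two concrete points in your writeup are wrong and need repair. First, your edge case at $n=1$: you wrote $a_2=(4\cdot3\cdot5/8)a_1-0+0$, but at $n=1$ only $a_{n-2}=a_{-1}$ vanishes, not $a_{n-1}=a_0$; the correct instance is $a_2=\tfrac{60}{8}a_1-\tfrac{80}{8}a_0=30-10=20$, whereas your formula gives $30$. Second, $X=0$ is \emph{not} an ordinary point of the equation: in the standard-derivative form \eqref{ZXeq} the leading coefficient is $X^2(1-4X)(1-12X+16X^2)$, which vanishes to order $2$ at the origin, so $X=0$ is a regular singular point with indicial equation $r^3=0$. (Your own observation that only $a_0$ is a free parameter --- rather than $a_0,a_1,a_2$ as at an ordinary point --- already contradicts the ``ordinary point'' claim.) The convergence conclusion survives, but via Frobenius' theorem for the analytic solution attached to the indicial root $r=0$: that series converges at least on the disc reaching the nearest \emph{other} singularity, which among $\tfrac14$ and $\tfrac{3\pm\sqrt5}{8}$ is $\tfrac18(3-\sqrt5)$. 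With those two corrections the argument is complete and is the natural proof of the corollary.
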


\section{Singular Values and Ramanujan-Sato Series of Level 20}

In this section, we given an abbreviated proof of the expansions for
$1/\pi$ resulting from the the differential equations for $Z$ and singular values
for the modular function $X(\tau)$. Well known results from the theory
of singular values and properties of the $j$-invariant are used. Most of the results needed may be
found in \cite{MR1400423,MR554399, MR890960}

For any natural number $N$ and $e||N$, consider the set of Atkin-Lehner involutions
\begin{equation*}
W_e = \left\{ \mat{e a}{b}{N c}{e d}\Big|
\begin{array}{l}
(a,b,c,d) \in \bbZ^4\\
e a d - \tfrac{N}{e} b c=1
\end{array} \right\}\text{.}
\end{equation*}
Each $W_e$ is a coset of $\Gamma_0(N)$ with the multiplication rule
\begin{equation*}
W_e W_f \equiv W_{e f/\gcd(e,f)^2} \bmod \Gamma_0(N)\text{.}
\end{equation*}
For any set of indices $e$ closed under this rule, the group
$\Gamma=\cup_e W_e$ is a subgroup of the normalizer in $PSL(2,\Bbb R)$ of
$\Gamma_0(N)$. Such a group is denoted as
$\Gamma_0(N)+w_{e_1},w_{e_2}\cdots$, or more succinctly as
$N+e_1,e_2,\cdots$. This is shortened to $N+$ when all of the indices
are present. For each such $\Gamma$ of genus $0$, let $t_\Gamma(\tau)$ be its normalized Hauptmodul
\begin{equation*}
    t_\Gamma(\tau) = \frac{1}{q} + \sum_{n=1}^{\infty}{a_n q^n}\text{.}
\end{equation*}

The modular function $X(\tau)$ defined in Theorem \ref{Xth} is the
normalized Hauptmodul for $\Gamma_{0}(20)+$ and is therefore invariant
under action by any element of $\Gamma_{0}(20)+$. To find explicit
evaluations for $X(\tau)$ in the upper half plane $\Bbb H$, we 
construct modular equations of level $20$ and degree $n\ge 2$. 

\begin{Definition}Let $t_\Gamma$ be a normalized Hauptmodul for
  $\Gamma$ and suppose that $\gcd(n,N) = 1$. Then the modular equation
  for $t_\Gamma$ is 
  \begin{align}
    \label{eq:14}
     \Psi_{n}^{t_{\Gamma}}(Y)=  \prod_{
    \underset{\underset{(\alpha,\beta,\delta)=1}{0\le\beta<\delta}}{\alpha
    \delta=n}} \left( Y - t_\Gamma\left( \frac{\alpha \tau+\beta}{\delta} \right)\right)\text{.}
  \end{align}
\end{Definition}

The modular equations may be expressed as a polynomials with integer coefficients in the
parameter $Y$ and $t_{\Gamma}$ \cite[Proposition 2.5]{MR1400423}.

\begin{Theorem} For each normalized Hauptmodul  $t_\Gamma$ and $\gcd(n,N) = 1$,
 $$\Psi_{n}^{t_{\Gamma}}(Y) \in \Bbb Z [ t_\Gamma ,Y].$$
\end{Theorem}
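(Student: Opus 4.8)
The plan is to show that $\Psi_n^{t_\Gamma}(Y)$, viewed as a polynomial in $Y$ whose coefficients are holomorphic functions on $\mathbb H$, has coefficients that are in fact polynomials in $t_\Gamma$ over $\mathbb Z$. First I would observe that since $\gcd(n,N)=1$, the matrices $\binom{\alpha\ \beta}{0\ \delta}$ with $\alpha\delta=n$, $0\le\beta<\delta$, $\gcd(\alpha,\beta,\delta)=1$ form a complete set of representatives for the action of $\Gamma_0(N)$ on the lattices of index $n$ that are ``cyclic away from $N$''; more precisely, for $g\in\Gamma_0(N)$ right multiplication permutes this set of cusps $\{(\alpha\tau+\beta)/\delta\}$ up to left multiplication by elements of $\Gamma_0(N)$ (this is the standard correspondence underlying the classical modular polynomial, adapted to $\Gamma_0(N)$; see \cite[Proposition 2.5]{MR1400423}). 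Since $t_\Gamma$ is $\Gamma_0(N)$-invariant (indeed $\Gamma$-invariant), each symmetric function of the $t_\Gamma((\alpha\tau+\beta)/\delta)$ is invariant under $\Gamma_0(N)$, hence a weight-zero meromorphic modular function for $\Gamma_0(N)$.

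Next I would control the poles. Each $t_\Gamma((\alpha\tau+\beta)/\delta)$ has a $q$-expansion in $\mathbb Z[[q^{1/\delta}]]$ with a pole only at the cusp, so every coefficient of $\Psi_n^{t_\Gamma}(Y)$ lies in $\mathbb Z[[q^{1/n}]]$ and is holomorphic on $\mathbb H$; moreover, being $\Gamma_0(N)$-invariant, it has no fractional powers and is actually a Laurent series in $\mathbb Z((q))$. A meromorphic $\Gamma_0(N)$-modular function holomorphic on $\mathbb H$ with poles only at cusps, for a genus-zero group $\Gamma$ of which $t_\Gamma$ is a Hauptmodul and which contains $\Gamma_0(N)$, is a rational function of $t_\Gamma$. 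To upgrade ``rational'' to ``polynomial'' I would check that the only pole of each coefficient is at the cusp $\tau\to i\infty$ (where $q\to 0$): at any other cusp of $\Gamma_0(N)$, the translates $(\alpha\tau+\beta)/\delta$ land at cusps where $t_\Gamma$ is finite, using that $t_\Gamma$ is holomorphic except at the single cusp of $\Gamma$ and that $\Gamma\supseteq\Gamma_0(N)$ identifies cusps. Hence each symmetric function is a polynomial in $t_\Gamma$; tracking leading behavior as $q\to 0$ pins down the degrees. Finally, integrality of the coefficients of this polynomial follows because both the $q$-expansion of the symmetric function and that of $t_\Gamma$ lie in $\mathbb Z((q))$ with $t_\Gamma = q^{-1}+O(1)$, so the polynomial expressing one in terms of the other can be solved for recursively over $\mathbb Z$.

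The main obstacle I expect is the bookkeeping at the cusps other than $i\infty$: one must verify carefully that for every $\Gamma_0(N)$-cusp $s\ne i\infty$ and every admissible $(\alpha,\beta,\delta)$, the point $(\alpha s+\beta)/\delta$ is $\Gamma$-equivalent to a cusp at which $t_\Gamma$ is finite — equivalently, that no pole of any $t_\Gamma((\alpha\tau+\beta)/\delta)$ occurs away from $q=0$ after symmetrization. Because $\Gamma$ is genus zero with a single cusp and contains $\Gamma_0(N)$ together with all the relevant Atkin--Lehner involutions (here $N=20$ and $\Gamma=\Gamma_0(20)+$), this reduces to a finite check on cusp representatives and the $W_e$-action; the coprimality $\gcd(n,N)=1$ is exactly what guarantees the $(\alpha,\beta,\delta)$-action commutes appropriately with the Atkin--Lehner structure so that the symmetric functions remain $\Gamma_0(N)$-invariant. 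Once this is in hand, the passage to $\mathbb Z[t_\Gamma,Y]$ is formal, and I would cite \cite[Proposition 2.5]{MR1400423} for the detailed version of this argument rather than reproduce it in full.
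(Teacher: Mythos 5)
The paper offers no argument for this theorem beyond the citation of \cite[Proposition 2.5]{MR1400423}, so your proposal is really being measured as a standalone sketch of that proposition's proof, and as such it has one genuine gap. You deduce that each coefficient of $\Psi_n^{t_\Gamma}(Y)$ is a rational (then polynomial) function of $t_\Gamma$ from the fact that it is $\Gamma_0(N)$-invariant, holomorphic on $\mathbb H$, and has poles only at cusps. That inference is false: $\mathbb C(t_\Gamma)$ is the function field of $X_\Gamma$ for the \emph{larger} group $\Gamma=\Gamma_0(N)+\cdots$, and a function invariant merely under $\Gamma_0(N)$ lies in $\mathbb C(X_0(N))$, which is a proper extension of $\mathbb C(t_\Gamma)$ of degree $[\Gamma:\Gamma_0(N)]$. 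For the case at hand this is not a pedantic distinction: $X_0(20)$ has genus $1$ (the paper records $\dim S_2(\Gamma_0(20))=1$), so $\mathbb C(X_0(20))$ is not even a rational function field, and there certainly exist $\Gamma_0(20)$-invariant functions holomorphic on $\mathbb H$ with poles only at cusps that are not rational in $t_\Gamma$. What you must prove is that the elementary symmetric functions of the $t_\Gamma\bigl((\alpha\tau+\beta)/\delta\bigr)$ are invariant under the full group $\Gamma$, i.e.\ under each Atkin--Lehner coset $W_e$ as well as $\Gamma_0(N)$; this is exactly where the hypothesis $\gcd(n,N)=1$ does its real work (each $W_e$ permutes the system of matrices $\smat{\alpha}{\beta}{0}{\delta}$ up to left multiplication by $\Gamma$), and it is the substantive content of the cited proposition. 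Your closing paragraph gestures at the Atkin--Lehner structure but aims it at the wrong target (``so that the symmetric functions remain $\Gamma_0(N)$-invariant''), which leaves the decisive step unproved.

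Two smaller points. The expansion of $t_\Gamma\bigl((\alpha\tau+\beta)/\delta\bigr)$ lies in $\mathbb Z[\zeta_\delta]((q^{1/\delta}))$, not $\mathbb Z((q^{1/\delta}))$: the coefficients pick up roots of unity $e^{2\pi i\beta m/\delta}$. Integrality of the symmetric functions therefore needs the standard supplementary argument that the Galois group of $\mathbb Q(\zeta_n)/\mathbb Q$ permutes the factors of the product, so the coefficients lie in $\mathbb Z[\zeta_n]\cap\mathbb Q((q))=\mathbb Z((q))$ after the $\tau\mapsto\tau+1$ invariance removes fractional powers; this is routine but should be said. Finally, your cusp bookkeeping is the right concern for upgrading ``rational in $t_\Gamma$'' to ``polynomial in $t_\Gamma$'' (note that $\Gamma_0(20)+$ need not have a single cusp class, so one genuinely must check that the non-infinite cusps contribute no poles), but that step only becomes available once the $\Gamma$-invariance above is established.
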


For each $\tau$ in Table \ref{tab_matrix}, we demonstrate that $X(\tau)$ is a root of a modular
equation
$\Psi_{n}^{X}(X)$ for some $n$ relatively prime to $20$.

\begin{Theorem}
  For $n\ge 2 $ with $\gcd(n,20) = 1$, and for each $\tau \in \Bbb H$ defined in Table \ref{tab_matrix}, we have $\Psi_{n}^{X}(X) = 0$,
  where $X = X(\tau)$.
\end{Theorem}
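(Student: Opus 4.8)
The plan is to exhibit, for each $\tau$ listed in Table \ref{tab_matrix}, an explicit modular transformation that certifies $X(\tau)$ as a root of $\Psi_n^X(X)$. The key observation is that each such $\tau$ is a CM point fixed (up to $\Gamma_0(20)+$-equivalence) by a matrix of the form $\smat{a}{b}{c}{d}$ of determinant $n$ with $\gcd(n,20)=1$. Concretely, I would first recall from \cite[Proposition 2.5]{MR1400423} (the theorem quoted just above) that the $n$-th modular polynomial $\Psi_n^X(Y)$ lies in $\bbZ[X,Y]$ and that its roots are exactly the values $t_\Gamma\!\left(\frac{\alpha\tau+\beta}{\delta}\right)$ as $(\alpha,\beta,\delta)$ ranges over the indexing set in \eqref{eq:14}. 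Thus it suffices to show that for each tabulated $\tau$ there is an admissible triple $(\alpha,\beta,\delta)$ with $\alpha\delta=n$ and an element $\gamma\in\Gamma_0(20)+$ such that $\gamma\cdot\frac{\alpha\tau+\beta}{\delta}=\tau$; the $\Gamma_0(20)+$-invariance of $X$ established in Theorem \ref{Xth} then gives $X\!\left(\frac{\alpha\tau+\beta}{\delta}\right)=X(\gamma\cdot\frac{\alpha\tau+\beta}{\delta})=X(\tau)$, so $X(\tau)$ is a root of $\Psi_n^X(Y)$, i.e. $\Psi_n^X(X)=0$ with $X=X(\tau)$.

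Second, I would produce the matrix factorization concretely. Each $\tau$ in the table is a root of a primitive integral binary quadratic form $[A,B,C]$ of discriminant $D<0$ with $20\mid A$ (so that $\tau\in X_0(20)$ in the relevant Atkin--Lehner orbit). Choosing $n$ to be a norm form value — equivalently, an integer represented by a form in the principal genus related to $D$ and coprime to $20$ — one gets an element of the order $\bbZ[\tau]$ of norm $n$, which yields a matrix $M=\smat{\alpha}{\beta}{0}{\delta}$ up to left multiplication by $SL_2(\bbZ)$; intersecting with the normalizer forces the left factor into $\Gamma_0(20)+$ after possibly composing with an Atkin--Lehner involution $W_e$ for $e\in\{4,5,20\}$ (the indices appearing in the $20+$ group). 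I would tabulate, row by row of Table \ref{tab_matrix}, the value of $D$, the chosen $n$, the triple $(\alpha,\beta,\delta)$, and the compensating $\gamma\in\Gamma_0(20)+$ — this is the routine but lengthy verification, essentially a finite check that I would present compactly as a table rather than transformation-by-transformation.

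The main obstacle I anticipate is bookkeeping around the Atkin--Lehner involutions: $X$ is invariant under all of $\Gamma_0(20)+$, but a given CM point $\tau$ and its Hecke translate $\frac{\alpha\tau+\beta}{\delta}$ need not be related by an element of $\Gamma_0(20)$ alone — one typically needs a $W_e$. Verifying that the requisite $W_e$ with $e\| 20$ exists (and lands in the genus-zero group for which $X$ is the Hauptmodul, not merely in the full normalizer) is the delicate point, and it is exactly here that the constraint $\gcd(n,20)=1$ is used, so that the Hecke correspondence $\Psi_n^X$ is "horizontal" and commutes with the Atkin--Lehner action. A secondary, purely computational nuisance is that $\Psi_n^X$ grows rapidly in degree and coefficient size with $n$, so for the larger $n$ in the table I would avoid writing $\Psi_n^X$ explicitly and instead argue abstractly via the transformation $\gamma$, deducing $\Psi_n^X(X)=0$ from invariance rather than from direct polynomial evaluation. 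Finally, I would note that $X(\tau)$ being algebraic of the predicted degree (a divisor of $\#\{(\alpha,\beta,\delta)\}$) follows automatically, which is what is needed downstream for the singular-value evaluations feeding the $1/\pi$ series.
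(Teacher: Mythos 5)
Your proposal follows essentially the same route as the paper: exhibit, for each tabulated $\tau$, a triple $(\alpha,\beta,\delta)$ with $\alpha\delta=n$ and an Atkin--Lehner element $\gamma\in W_e$, $e\mid\mid 20$, relating $\frac{\alpha\tau+\beta}{\delta}$ to $\tau$, then invoke the invariance of $X$ under $\Gamma_0(20)+$ to conclude $X(\tau)=X\bigl(\frac{\alpha\tau+\beta}{\delta}\bigr)$ is a root of $\Psi_n^X(Y)$. The extra material you add on constructing the matrices via norms in $\bbZ[\tau]$ is a reasonable elaboration of the finite check the paper delegates to Table \ref{tab_matrix}, but the underlying argument is identical.
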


\begin{proof}
  To show each $\tau$ in Table \ref{tab_matrix} satisfies $\Psi_{n}^{X}(X) = 0$,
  where $X = X(\tau)$, we determine an integer $n$ and pairwise relatively prime integers
$\alpha, \beta, \delta$ such that $\alpha \delta = n$ and $0\le \beta
<\delta$ with  $(\alpha \tau + \beta)/\delta$ equal to the image of
$\tau$ under an Atkin-Lehner involution from $W_{e}$
for some $e\mid\mid
20$. Since $X(\tau)$ is invariant under each Atkin-Lehner involution,
we see that $X(\tau) - X((\alpha \tau + \beta)/\delta) = 0$, so
$X = X(\tau)$ satisfies $\Psi_{n}^{X}(X) = 0$.
\end{proof}

To evaluate $X(\tau)$ for each quadratic irrational $\tau$ listed in
Table \ref{tab_matrix}, a sufficiently accurate approximation of
$X(\tau)$ may be used to distinguish the corresponding root of
$\Psi_{n}^{X}(X)$ and to evaluate $X(\tau)$. We illustrate the
technique for deriving expansions for $1/\pi$ in the next
example. The procedure relies on computing modular equations as in
\cite{MR2073912}.

Let $\tau_{0} = \tau(20,-40,23) = \frac{1}{10} \left(10+i
  \sqrt{15}\right)$. Then
\begin{align}
  \label{eq:13}
  \frac{\tau_{0} + 2}{3}  = \frac{20\tau_{0} -21}{20\tau_{0} -20}
\end{align}
Hence, with $(a,b,c,d) = (1,-21,1,-1)$ and $e=N = 20$, we have 
M:= \begin{align}
  \label{eq:5}
  \begin{pmatrix}
    ea & b \\ Nc & ed
  \end{pmatrix}
=
  \begin{pmatrix}
20 & -21 \\ 20 & -20 
\end{pmatrix} \in W_{20}.
\end{align}
Therefore, 
\begin{align}
  \label{eq:15}
  X \left ( \frac{\tau_{0} + 2}{3} \right ) = X(\tau_{0}), 
\end{align}
and so $X = X(\tau)$ is a root of the modular equation
\begin{align}
  \Psi_{3}(X,Y) = X^4-256 X^3& Y^3+192 X^3 Y^2-30 X^3 Y+192 X^2 Y^3-93 X^2
  Y^2 \nonumber \\ &+12 X^2 Y-30 X Y^3+12 X Y^2-X Y+Y^4 = 0. \label{me}
\end{align}
Hence, $X(\tau)$ is a root of
 $$\Psi_{3}(X,X) =-X^{2} (X-1)  (16 X-1) \left(16 X^2-7 X+1\right) = 0.$$
By numerically
approximating $X(\tau)$ and applying \eqref{me}, we deduce 
\eqref{me}, 
\begin{eqnarray}
  \label{eq:16}
  X(\tau_{0}) = 1/16,\qquad \frac{dY}{dX} = -1, \qquad  \frac{d^{2}Y}{dX^{2}} \bigg |_{\tau = \tau_{0}} =  - \frac{32}{3}.
\end{eqnarray}
Moreover, we may apply \eqref{me} to expand $Y$ about $X(\tau_{0})$
\begin{align} \label{expand}
  X(M\tau) = \sum_{k=0}^{\infty} Y^{(k)}(X(\tau_{0}))
   \frac{( X(\tau) - X(\tau_{0}))^{k}}{k!},\qquad Y^{(k)} = \frac{dY^{k}}{dX^{k}}. 
\end{align}
By applying $\frac{1}{2\pi i} \frac{d}{d\tau}$ twice to both sides of
\eqref{expand} and
applying \eqref{dX}, 
\begin{align}
  \label{eq:18}
  & \frac{20i}{\pi W\left ( 1 - \frac{dY}{dX} \right ) (20 \tau
  -20)} \bigg |_{\tau = \tau_{0}} \\  &= \left ( X \frac{dZ}{dX} + \left ( 1 +
  \frac{X}{W} \frac{dW}{dX} + X \frac{\frac{d^{2}Y}{dX^{2}}}{\frac{dY}{dX} \left ( 1 - \frac{dY}{dX} \right )} \right ) Z
  \right ) \bigg |_{\tau = \tau_{0}}, \nonumber
\end{align}
where 
\begin{align}
  \label{eq:17}
 W := W(X) :=\sqrt{(1-4X)(1-12X+16X^{2})}.
\end{align}
By \eqref{eq:16}, \eqref{eq:18}, and the fact that $Z = \sum_{n=0}^{\infty} A_{n}X^{n}$, we obtain
\begin{align}
  \label{eq:19}
  \frac{1}{\pi} = \frac{3}{8}\sum_{n=0}^{\infty} \frac{A_{n}}{16^{n}}
  \left ( n + \frac{1}{6} \right ).
\end{align}
Similarly, for each $\tau_{0}$ such that $X(\tau_{0})$ lies in the
radius of convergence for Theorem \ref{ZXeq}, we use the modular
equation to deduce the series expansions for $\pi$. The matrix $M$ under which $X$ is invariant, and
other parameters for each modular equation appear in Table
\ref{tab_matrix}. 
%Therefore, it remains to derive a list of algebraic $\tau$ such that $X(\tau)$ lies within the radius of convergence for Theorem
%\ref{ZXeq}. 
We restrict ourselves to singular values of degree no
more than $2$ over $\Bbb Q$. 

In order to formulate a complete list of algebraic $\tau$ such that
$X(\tau)$ has algebraic degree two over $\Bbb Q$, e use well known facts about the $j$ invariant
  \cite{MR1513075}. First, for algebraic $\tau$, the only algebraic values of $j(\tau)$
  occur at $\Im \tau>0$ satisfying $a\tau^{2}
  + b\tau + c = 0$ for $a,b,c \in \Bbb Z$, with $d =
  b^{2} - 4ac <0$. Moreover, $$[\Bbb Q(j(\tau)):\Bbb Q] = h(d),$$ where
  $h(d)$ is the class number. Since there is a polynomial relation
  $P(X,j)$ between $X$ and $j$ of degree $2$ \cite[Remark
  1.5.3]{MR1400423}, we have $$[\Bbb Q(j(\tau)):\Bbb Q] \le  2 [\Bbb
  Q(X(\tau)):\Bbb Q],$$ and so values $\tau$ with  $[\Bbb Q(X(\tau )): \Bbb Q]
  \le 2$ satisfy $$[\Bbb Q(j(\tau)):\Bbb Q] =h(d) \le 4.$$ Therefore, the
  bound $|d| \le 1555$ for $h(d) \le 4$ from \cite{MR2031415}
  implies that Algorithm \ref{br} gives a complete list of algebraic
  $(\tau, X(\tau))$ with $[\Bbb Q(X(\tau )): \Bbb Q] \le 2$: \\

\begin{Algorithm} \label{br}
  \indent For each discriminant $-1555 \le d \le -1$, 
  \begin{enumerate}
  \item List all primitive reduced $\tau = \tau(a,b,c)$ of
    discriminant $d$ in a fundamental domain for $PSL_{2}(\Bbb Z)$. Translate these values via a set of coset representatives for $\Gamma_{0}(20)$ to a fundamental domain for $\Gamma_{0}(20)$. 
\item Factor the resultant of $P(X,Y)$ and the class polynomial
  $$H_{d}(Y) = \prod_{\substack{(a,b,c)\text{ reduced, primitive} \\
      d = b^{2}-4ac}} \left ( Y - j\Bigl (
  \frac{-b+\sqrt{d}}{2 a} \Bigr ) \right ).$$  Linear and quadratic factors
  of the resultant correspond to a complete list of $ X(\tau)$, for
  $\tau$ of discriminant $d$, such that $[\Bbb Q(X(\tau )): \Bbb Q]
  \le 2$.
\item Associate candidate values $\tau$ from Step 1 to $X$ by
  approximating $X(\tau)$. For each approximation, prove the evaluation $X=X(\tau)$ by deriving a
  corresponding modular equation for which $X(\tau)$ are
  roots. The proof of each evaluation may be accomplished through a
  rigorous derivation of the first decimal digits of $X(\tau)$ and a
  comparison of these values with those of the roots of the modular equation. 
  \end{enumerate}
  \end{Algorithm}

Algorithm \ref{br} was implemented in Mathematica resulting in Table
\ref{tab_x_values_17A}. 

\begin{table}
\begin{equation*}
\begin{array}{lll}
b^2-4ac & \tau(a,b,c) & X(\tau)  \\ 
 -16 & (4,-8,5) & \frac{1}{8}(7 - 3 \sqrt{5}) \\
 \left [{-40 \atop -160} \right ]& \left [ {(10,-20,11)  \atop
                                   (40,-80,41)} \right ]& -3/2+ \sqrt{5/2} \\
% -160 & (40,-80,41) & -3/2 + \sqrt{5/2} \\
 -240 & (20,-40,23) & 1/16 \\
 -256 & (20,-12,5) & i/8\sqrt{2}\\
 -256 & (20,-28,13) & -i/8\sqrt{2} \\
 -320 & (20,-20,9) & \frac{1}{16}(1 - \sqrt{5}) \\
 -400 & (100,-200,101) & \frac{1}{8}(7 - 3\sqrt{5}) \\
 -480 & (20,-20,11) & \frac{1}{8}(-7 + 3\sqrt{5}) \\
 -640 & (20,-20,13) & \frac{1}{8}(3 - \sqrt{10}) \\
 -880 & (20,-40,31) & \frac{1}{32}(7 - 3\sqrt{5})\\
 -960 & (20,-20,17) & \frac{1}{16}(-4+\sqrt{15}) \\
 -1120 & (20,-20,19) & \frac{1}{8}(-47+21\sqrt{5}) \\
 -1360 & (20,-40,37) & 1/18 \sqrt{85}+166 \\
 -2080 & (20,20,31) & -161/4 +18 \sqrt{5} \\
-3040 & (20,20,43) & -721/4+57 \sqrt{10}
\end{array}
\end{equation*}
\caption{Complete list of quadratic singular values of $X(\tau)$ within the radius of convergence of Theorem \ref{ZXeq}.}
\label{tab_x_values_17A}
\end{table}

In the final two
tables, we list 
%singular values within the radius
%of convergence of Theorem \ref{ZXeq} and 
constants defining level $20$ expansions 
\begin{align}
  \label{fm}
 \frac{1}{\pi} =
 A\sum_{n=0}^{\infty} a_{n}  (n + B )C^{n}.
\end{align}
\begin{table}
\begin{equation*}
\begin{array}{clllc}
\tau(a,b,c) & n & e & (\alpha, \beta; 0, \delta) & \gamma \in W_{e} \\ 
  (4,-8,5) & 29 & 4 & (1, 5; 0, 29) & (4,-5;20,-24) \\
(10,-20,11)  & 13 &5 & (1, 9; 0, 13) & (5,-5;20,-30) \\
 (40,-80,41) & 13 & 5 & (1,4;0,13) & (15,-17;40,-45) \\
(20,-40,23) & 3 & 20& (1, 2; 0, 3)    & (20,-21,20,-20)\\
 (20,-12,5) & 17 & 4 &  (1,10,0,17) & (12,-5; 20,-8) \\
(20,-28,13) & 17 & 4 & (1, 13; 0, 17) & (16,-13; 20, -16) \\
 (20,-20,9) & 9 & 20 & (1, 0; 0, 9) & (0,-1; 20,-20) \\
 (100,-200,101) &29 & 4& (1, 6;0, 29) & (24,-25; 100, -104)\\
 (20,-20,11) & 11 & 20 & (1, 0; 0, 11) & (0, -1; 20, -20)   \\
 (20,-20,13) & 13 & 20 & (1, 0; 0, 13) & (0, -1; 20, -20)  \\
(20,-40,31) & 11 & 20 & (1,10; 0, 11) & (20, -21; 20, -20) \\
 (20,-20,17) & 17 & 20 &(1, 0; 0, 17) & (0, -1; 20, -20)  \\
 (20,-20,19) & 19 & 20 &  (1, 0; 0, 19) & (0, -1; 20, -20) \\
 (20,-40,37) & 17 & 20 & (1, 16; 0, 17) & (0, -21; 20, -20) \\
(20,20,31) & 31 & 20 & (1,1;0,31) & (0, -1; 20, -20) \\
(20,20,43) & 43 & 20&  (1,0;0, 43) & (0, -1; 20, -20) 
\end{array}
\end{equation*}
\caption{Matrices $(\alpha, \beta; 0, \delta)$ mapping $\tau$ to
  the image under an element from $W_{e}$, demonstrating $X(\tau)$ is a root of $\Psi_{n}^{X}(X)$}
\label{tab_matrix}
\end{table}

%The following series comes from the singular value $1/16$:
%\begin{align}
 % \label{eq:6}
%  \frac{1}{\pi} =\sum_{n=0}^{\infty} \frac{(6n+1)a_{n}}{16^{n+1}}.
%\end{align}
% Does this
%give an efficient alternative to the BBP algorithm for computing
%digits of $\pi$ independent of other digits? \\
%
%This looks a lot like Ramanujan's formula
%\begin{align}
 % \label{eq:4}
 % \frac{1}{\pi} = \sum_{n=0}^{\infty} \frac{(6n+1)(1/2)_{n}}{4^{n+1} (n!)^{3}},
%\end{align} 
%but there appears to be no immediate connection. 
%

\begin{table}
\begin{equation*}
\begin{array}{lll} 
 \ \ \ \ \ \ \ \ \ \  A  & \ \ \ \ \ \ \  B & \ \ \ \ \ \ \   C\\  
2 \sqrt{9 \sqrt{5}-20} & \frac{1}{4}
  (3-\sqrt{5}) & \frac{1}{8} (7-3
  \sqrt{5}) \\  \sqrt{506-160 \sqrt{10}} & \frac{1}{6}
                                                    \left(4-\sqrt{10}\right)
            & -3/2+ \sqrt{5/2}  \\ 
%\hline \frac{1}{2} \sqrt{506-160 \sqrt{10}} & * &
 %                                                                    -3/2+ \sqrt{5/2}  \\ \hline
 3/8 & 1/6 & 1/16 \\ 
 \frac{1}{5}
  \sqrt{8-\frac{31 i}{\sqrt{2}}} & \frac{1}{66} \left(31-8 i
                                    \sqrt{2}\right) & i/8
                                                       \sqrt{2} \\
  \frac{1}{5}
  \sqrt{8+\frac{31 i}{\sqrt{2}}} & \frac{1}{66} \left(31+8 i
                                    \sqrt{2}\right) & -i/8
                                                       \sqrt{2} \\
   \sqrt{\frac{1}{2} \left(\sqrt{5}+2\right)} & 1/2 &
                                                                \frac{1}{16}
                                                                \left(1-\sqrt{5}\right)
  \\  
%\frac{2}{5} \sqrt{9 \sqrt{5}-20} & * &  \frac{1}{8}(7 - 3\sqrt{5}) \\  6 \sqrt{9-4 \sqrt{5}} & \frac{1}{12} \left(9-2 \sqrt{5}\right) &  \frac{1}{8}(-7 + 3\sqrt{5}) \\ 
  \sqrt{\sqrt{10}-1} & \frac{1}{6} \left(5-\sqrt{10}\right) &
                                                                  \frac{1}{8}
                                                                  \left(3-\sqrt{10}\right)
  \\  11/8 & \frac{1}{22} \left(13-4 \sqrt{5}\right) &
                                                                 \frac{1}{32}
                                                                 \left(7-3
                                                                 \sqrt{5}\right)
  \\  \frac{1}{4} \sqrt{636-153 \sqrt{15}} & \frac{1}{462}
                                                     \left(261-40
                                                     \sqrt{15}\right)
            & \frac{1}{16} \left(\sqrt{15}-4\right) \\  14 \sqrt{805-360 \sqrt{5}}
      & \frac{1}{140} \left(105-34 \sqrt{5}\right) & \frac{1}{8}
                                                       \left(21
                                                       \sqrt{5}-47\right)
  \\  12 \sqrt{697 \sqrt{85}-6426} & \frac{1}{204}
                                             \left(153-13
                                             \sqrt{85}\right) &
                                                                 \frac{1}{8}
                                                                 \left(83-9
                                                                 \sqrt{85}\right)
  \\  78 \sqrt{14445-6460 \sqrt{5}} & \frac{1}{780}
                                              \left(585-212
                                              \sqrt{5}\right) &
                                                                 \frac{1}{4}
                                                                 \left(72
                                                                 \sqrt{5}-161\right)
  \\  646 \sqrt{27379-8658 \sqrt{10}} & \frac{969-259 \sqrt{10}}{1292} & \frac{1}{4} \left(228 \sqrt{10}-721\right) \\ 
\end{array}
\end{equation*}
\caption{Constants defining level $20$ expansions \eqref{fm}. }
\label{tab_x_values_17}
\end{table}

%--------------------------------------------------------------------------------------
\clearpage

\bibliographystyle{plain}
\bibliography{references}

\end{document}